\theoremstyle{plain}
\newtheorem{teo}{Theorem}[section]
\newtheorem{prop}[teo]{Proposition}
\newtheorem{pro}[teo]{Problem}
\newtheorem{cor}[teo]{Corollary}
\newtheorem{corollary}[teo]{Corollary}
\newtheorem*{nteo*}{\namedthmname}
\newtheorem{que}[teo]{Question}
\newtheorem*{ncor*}{\namedcorname}
\theoremstyle{definition}
\newtheorem{defin}[teo]{Definition}
\newtheorem{fact}[teo]{Fact}
\newtheorem{rem}[teo]{Remark}
\newtheorem{exa}[teo]{Example}
\newtheorem{example}[teo]{Example}
\theoremstyle{remark}
\newtheorem{notation}[teo]{Notation}
\DeclareMathOperator{\G}{\mathfrak{G}_\delta}
\DeclareMathOperator{\GG}{\mathfrak{G}_{\delta\sigma}}
\DeclareMathOperator{\F}{\mathfrak{F}_\sigma}
\DeclareMathOperator{\sLC}{\mathfrak{LC}}
\DeclareMathOperator{\SG}{\mathfrak{G}_\delta}
\DeclareMathOperator{\SGG}{\mathfrak{G}_{\delta\sigma}}
\DeclareMathOperator{\FF}{\mathfrak{F}_{\sigma\delta}}
\DeclareMathOperator{\SF}{\mathfrak{F}_\sigma}
\DeclareMathOperator{\SFF}{\mathfrak{F}_{\sigma\delta}}
\DeclareMathOperator{\sPol}{\mathfrak{Pol}}
\DeclareMathOperator{\CH}{\mathfrak{Char}}
\DeclareMathOperator{\ACH}{\mathfrak{a-Char}}
\DeclareMathOperator{\tu}{t_{\us}}
\DeclareMathOperator{\sv}{s_{\vs}}
\DeclareMathOperator{\su}{s_{\us}}
\newcommand{\vvv}{\mathbf{v}}
\newcommand{\uuu}{\mathbf{u}}
\newcommand{\J}{\mathbb{J}}
\newcommand{\T}{\mathbb{T}}
\newcommand{\PP}{\mathbb{P}}
\newcommand{\Z}{\mathbb{Z}}
\newcommand{\Q}{\mathbb{Q}}
\newcommand{\R}{\mathbb{R}}
\newcommand{\N}{\mathbb{N}}
\newcommand{\us}{\mathbf{u}}
\newcommand{\cc}{\mathfrak{c}}
\newcommand{\vs}{\mathbf{v}}
\newcommand{\ep}{\varepsilon}
\newcommand{\tTu}{\tu(\T)}
\newcommand{\lb}{\left\|}
\newcommand{\rb}{\right\|}
\title{Questions on the Borel Complexity of Characterized Subgroups}
\author{Dikran Dikranjan and Daniele Impieri}
\begin{document}

\maketitle
\begin{abstract}
A subgroup $H$ of a compact abelian group $X$ is said to be characterized by a sequence $(v_n)$ of characters of $X$, if  $H =  \left\{x\in X:v_n(x)\to0 \text{ in }\T\right\}$. We discuss  various  known facts and open problems about the Borel complexity of characterized subgroups of compact abelian groups.
\end{abstract}

  \section{Introduction}
  In the sequel  $\T=\R/\Z$ will be denote the circle group written additively, identifying each $x\in \T$ with an obviously determined element of the interval $[0,1)$. In these terms we consider the norm in $\T$ defined by $\|x\| = \min\{x, 1-x\}$ for $x\in \T$. It defines an invariant metric $d$ on $\T$ by letting $d(x,y) = \|x-y\|$, for $x,y\in \T$. 
  
For a topological abelian group $X$, a continuous homomorphism $\chi:X\to\T$ is called a \emph{character} of $X$. Denote by $\widehat{X}$ the group of all characters of $X$, that is the Pontryagin dual of $X$.
	\begin{defin}
	 For a compact abelian group $X$ and  a sequence $\vs=(v_n)_{n\in\N}$ of characters of $X$ let 
	  \begin{equation*}
	  \sv(X):=\left\{x\in X:v_n(x)\to0 \text{ in }\T\right\}.
	  \end{equation*}
	\end{defin}

One can easily check that  $  \sv(X)$ is a subgroup of $X$. 
	
	\begin{defin}\label{DefChar}  {\rm \cite{DMT}} A subgroup $H$ of a compact abelian group $X$ is called \emph{characterized} if there exists a sequence of characters $\vs$ such that $H=\sv(X)$. We also say that $\vs$ \emph{characterizes} $H$ and denote by $\CH(X)$ the family of all characterized subgroups of $X$.
	\end{defin}

The following general problem has been studied in \cite{DG}.

\begin{pro}\label{probBor}
Describe the Borel complexity of $\sv(X)$ for a compact abelian group $X$.
\end{pro}

The Borel complexity of a Borel set is the \lq\lq lowest\rq\rq  class in the Borel Hierarchy where such a set belongs to. Let us recall the first six classes of the Borel Hierarchy.
 
 \begin{notation}\label{nsub}
  
  \begin{equation*}
	\begin{matrix}
		& \{\text{open sets}\} \!&\!\subseteq\! &\! \{G_\delta\text{-sets}\} \!&\!\subseteq \!&\!\{\text{$G_{\delta\sigma}$-sets}\}&\\
		& \{\text{closed sets}\} \!&\! \subseteq \!& \!\{F_\sigma\text{-sets}\}&\subseteq \!&\!\{\text{$F_{\sigma\delta}$-sets}\}&
	\end{matrix},
\end{equation*}
where $G_\delta$-sets are countable intersection of open sets, $F_\sigma$-sets are countable union of closed sets, $G_{\delta\sigma}$ are countable union of $G_\delta$-sets and $F_{\sigma\delta}$-sets are countable intersection of $F_\sigma$-sets.
 
 In this paper we are interested in Borel subgroups, hence we shall denote by $\G(X)$, $\F(X)$, $\GG(X)$ and $\FF(X)$ the class, respectively, of $G_{\delta}$-subgroups, $F_{\sigma}$-subgroups, $G_{\delta\sigma}$-subgroups and $F_{\delta\sigma}$-subgroups of a given topological group $X$ (e.g., accordingly, an $F_{\sigma\delta}$-subgroup means a subgroup that is an $F_{\sigma\delta}$-set as a subset, etc.). 
\end{notation}

\begin{rem}\label{rem1} Every characterized subgroup $H$  of a compact abelian group $X$ is an $F_{\sigma\delta}$-subgroup of $X$, and hence $H$ is a Borel subset of $X$. Indeed, if $H$ is characterized by a sequence $\vs$, this fact directly follows from the equality
	  \begin{equation*}
	  \sv(X) = \bigcap_{M=1}^\infty \bigcup_{m=0}^\infty \left( \bigcap_{n\geq m}^\infty \left\{ x\in X : \lb v_n(x)\rb \leq \frac{1}{M} \right\} \right).
	\end{equation*}
\end{rem}

Since the characterized subgroups are Borel, if $H\in\CH(X)$ then either $|H|\le\aleph_0$ or $|H|=\cc$ by Alexandroff-Hausdorff's theorem \cite[\S 37, Theorem 3]{KK}. 

One can define characterized subgroups of an arbitrary abelian topological group. Similarly as in Remark \ref{rem1}, one can see
that they are always Borel subgroups (actually,  $F_{\delta\sigma}$-subgroups). On the other hand, Borel subgroups of the uncountable Polish abelian groups need not be characterized in general. Indeed, the above remark establishes an upper bound for the Borel complexity of characterized subgroups.
On the other hand, every uncountable Polish group contains arbitrarily complicated Borel subgroups \cite[Theorem 2.1]{FS}.
This implies that every uncountable Polish group contains a Borel subgroup that is not characterized. 

The main issue discussed in this paper is when characterized subgroups are $F_\sigma$, as well as the opposite 
question: when an $F_\sigma$-subgroup of a compact abelian group is a characterized subgroup.  
	
The paper is organized as follows. Section 2 is dedicated to the general case of arbitrary compact abelian groups. In \S 2.1 we recall some known facts
about characterized subgroups and their Borel complexity, starting from the case of countable subgroups and $G_\delta$-subgroups. Then comes the reduction
to the case of metrizable compact abelian groups and some facts related to $F_\sigma$-subgroups. This introduces in a natural way the next topic, in  \S 2.2, 
of the next sharper necessary condition for a subgroup to be characterized, namely, to be Polishable (beyond being a Borel subgroup). 
This gives rise to a finer Polish topology $\tau(H)$ (witnessing Polishability) for  a characterized subgroup $H$ of a compact metrizable abelian group. Further properties of this topology are discussed in \S 2.3, local compactness being the prominent among all. These stronger properties of $\tau(H)$ turn out to be sufficient for a group $H$ to be characterized. In \S 2.4 comes a general criterion for a characterized subgroup $H$ to be an $F_\sigma$-subgroup in terms of a new topology, obtained by a suitable modification of  $\tau(H)$. 

Section 3 is dedicated to the case of subgroups of $\T$. Since $\widehat \T = \Z$, now the sequences of characters are simply sequences of integers. 
It turns out that the proper characterized subgroups of $\T$ whose associated Polish
topology is locally compact are precisely the countable ones. Moreover, among all compact abelian groups only the group $\T$ has this property. 
A special attention is dedicated to sequences $\uuu$ such that $u_n | u_{n+1}$. In this case we obtain a complete description of the characterized subgroups of $\T$ that are $F_\sigma$. Surprisingly,   these are precisely the countable subgroups of $\T$.  
  
\section{Characterized Subgroups of Compact Abelian Groups}

\subsection{Some known results on Borel Complexity}

It was proved by  B\' \i r\' o, Deshouillers and S\'os \cite{BDS}	that all countable subgroups of $\T$ are characterized. 
These authors conjectured that this fact could be extended to all 
compact metrizable abelian groups, although they gave no rigorous definition of a characterized
subgroup of a compact abelian group. The relevant definition 	\ref{DefChar}  was provided later only in \cite{DMT}, where cyclic subgroups of 
some compact metrizable abelian groups were proved to be characterized. 
The fact conjectured by B\' \i r\' o, Deshouillers and S\'os was proved to be true: 
	
\begin{teo}[{\cite[Theorem 1.4]{DK},\cite[Theorem 3.1]{BSW}}]\label{ThmDK}   The countable subgroups of a compact metrizable abelian group are characterized.	\end{teo}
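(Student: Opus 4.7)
The plan is to build a characterizing sequence $(v_n)\subseteq \widehat X$ directly by induction and diagonalization, crucially exploiting that a compact metrizable abelian group has a countable Pontryagin dual. Fix enumerations $H=\{h_n:n\in\N\}$ (with $h_0=0$) and $\widehat X=\{\chi_n:n\in\N\}$. At stage $n$ I aim to pick $v_n$ satisfying two clauses: a \textit{convergence clause} $\|v_n(h_j)\|\leq 2^{-n}$ for all $j\leq n$, which immediately yields $H\subseteq\sv(X)$; and a \textit{separation clause}, implemented diagonally, guaranteeing that for every $x\in X\setminus H$ the values $v_n(x)$ fail to tend to $0$, and hence $\sv(X)\subseteq H$.

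The convergence clause is always achievable. At stage $n$ the admissible pool $A_n=\{\chi\in\widehat X:\|\chi(h_j)\|\leq 2^{-n}\text{ for all }j\leq n\}$ is infinite: setting $L_n=\langle h_1,\dots,h_n\rangle^\perp\subseteq\widehat X$, either $L_n$ is already infinite (and then $L_n\subseteq A_n$), or the image of the evaluation map $\widehat X\to\T^n$, $\chi\mapsto(\chi(h_j))_{j\leq n}$, is an infinite subgroup of $\T^n$, in which case its closure contains a positive-dimensional subtorus and $A_n$ picks up infinitely many preimages of the cube $\{y\in\T^n:\|y_j\|\leq 2^{-n}\}$. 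In either case we have an infinite reservoir from which to select $v_n$.

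The separation clause is the heart of the argument. Since $X\setminus H$ is uncountable, the diagonalization must be driven by the countable data $\widehat X$. I would split $\N$ into infinitely many infinite blocks $N_k$ and, at each $n\in N_k$, choose $v_n$ inside the coset $\chi_k+A_n$, still an infinite set by the same argument applied to the shifted condition $\|\chi_k(h_j)+\eta(h_j)\|\leq 2^{-n}$. Thus every character $\chi_k\in\widehat X$ is echoed infinitely often by $(v_n)$ modulo perturbations small on initial segments of $H$. The plan is then to read off $\sv(X)\subseteq H$ contrapositively: if $x\in\sv(X)$, then along each block $N_k$ the relation $\chi_k(x)+\eta_n(x)\to 0$ should squeeze $\chi_k(x)$ toward $0$, and with $\chi_k$ varying one concludes $x\in H$ because $\widehat X$ separates points of $X$.

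The last step is the main obstacle: the perturbations $\eta_n$ are only controlled on the finite set $\{h_1,\dots,h_n\}$, so a priori $\eta_n(x)$ is arbitrary for $x\notin H$, and the squeezing step does not close without further work. The fix, essentially that of \cite{DK, BSW}, is a refined two-layer diagonalization: enumerate pairs $(\chi_k,F)$ where $F$ runs over finite subsets of a fixed countable dense subset of $X$ (available by metrizability) and schedule stages so each pair is visited cofinally often, forcing the perturbations $\eta_n$ to become small on arbitrary finite subsets of $X$ along suitable subsequences. The combinatorial core of the proof — and the step I expect to require the most care — is verifying that this multi-layered selection can be carried out coherently, i.e., that at every stage the intersection of all imposed constraints remains nonempty; this is precisely where countability of $\widehat X$ (from metrizability of $X$) and countability of $H$ are simultaneously essential.
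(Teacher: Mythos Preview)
The paper does not prove this theorem; it is quoted from \cite{DK} and \cite{BSW} and used as a black box throughout. So there is no argument in the paper to compare against, and I will assess your proposal on its own terms.

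Your separation scheme has a genuine gap, and it is conceptual rather than merely technical. On a block $N_k$ you write $v_n=\chi_k+\eta_n$ with the convergence clause $\|v_n(h_j)\|\le 2^{-n}$ in force. Then for every $h\in H$ one has $\eta_n(h)=v_n(h)-\chi_k(h)\to -\chi_k(h)$, so the perturbations are \emph{not} small on $H$; along $N_k$ they converge pointwise to $-\chi_k$. Your two-layer refinement asks in addition that $\eta_n$ be small on finite sets $F$ drawn from a countable dense $D\subseteq X$. If $D$ meets $H$ nontrivially these two demands are eventually incompatible (for $h\in D\cap H$ with $\chi_k(h)\ne 0$ you would need $\|\eta_n(h)\|$ and $\|\eta_n(h)+\chi_k(h)\|$ both below $2^{-n}$). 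If instead $D$ is taken disjoint from $H$, the squeezing only shows $v_n(x)\to\chi_k(x)$ along a subsequence for $x\in D$, hence at best $\sv(X)\cap D=\emptyset$; this says nothing about the typically uncountable set $X\setminus(H\cup D)$, since characters are not equicontinuous and pointwise control on a dense set does not propagate.

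There is also a mismatch of target: even if the squeezing went through and yielded $\chi_k(x)=0$ for every $k$ whenever $x\in\sv(X)$, Pontryagin duality would give $x=0$, i.e.\ $\sv(X)=\{0\}$, contradicting $H\subseteq\sv(X)$ for every nontrivial $H$. A diagonalization that pits $x$ against \emph{all} of $\widehat X$ cannot distinguish $H$ from $\{0\}$. The hard case the cited proofs actually confront is $x\in\overline{H}\setminus H$, where no single character separates $x$ from $H$; both \cite{DK} and \cite{BSW} build the sequence so that suitable \emph{differences} or \emph{patterns} among the $v_n$ (rather than individual coset shifts by a fixed $\chi_k$) witness $v_n(x)\not\to 0$, and the combinatorics there are substantially more delicate than the scheme you outline.
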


\begin{fact} \label{l21}
It was pointed out in \cite{VN} that metrizability is a necessary condition in the above theorem. Indeed, for a sequence $\vs$ of characters of a compact abelian groups $X$, the subgroup $K_\vs=\bigcap_{n\in\omega} \ker v_n$ is a closed $G_\delta$-subgroup of $X$ contained in $\sv(X)$. Hence, if the (countable) subgroup $\{0\}$ of $X$ is 
characterized, then $\{0\}$ is a $G_\delta$-subgroup of $X$, so $X$ is metrizable. 
\end{fact}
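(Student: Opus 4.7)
My plan is to verify, one by one, the three intermediate claims embedded in the statement, and then to conclude by invoking a standard metrization result for compact groups.

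First, the inclusion $K_\vs \subseteq \sv(X)$ is immediate: if $x \in K_\vs$, then $v_n(x) = 0$ for every $n$, hence trivially $v_n(x) \to 0$ in $\T$. Second, to see that $K_\vs$ is a closed $G_\delta$-subgroup of $X$, I would note that each $\ker v_n = v_n^{-1}(\{0\})$ is closed in $X$ by continuity of $v_n$; moreover, since $\{0\}$ is a closed $G_\delta$-subset of the metrizable group $\T$, the preimage $\ker v_n$ is $G_\delta$ in $X$ as well. A countable intersection of closed $G_\delta$-sets is again closed and $G_\delta$, so $K_\vs$ has the asserted type.

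Combining the two observations, if $\{0\}\in\CH(X)$, witnessed by a sequence $\vs$, then $K_\vs \subseteq \sv(X) = \{0\}$, so $K_\vs = \{0\}$; this makes $\{0\}$ a $G_\delta$-subset of $X$.

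The final — and only nontrivial — step is to conclude that a compact Hausdorff abelian group $X$ in which $\{0\}$ is $G_\delta$ must be metrizable. I would first upgrade a representation $\{0\} = \bigcap_n U_n$ (with each $U_n$ open) to a countable neighborhood base at $0$: using normality of the compact Hausdorff space $X$, shrink each $U_n$ to a closed neighborhood $F_n$ of $0$ so that $\bigcap_n F_n = \{0\}$. For any open $V \ni 0$, the sets $F_n \setminus V$ are closed with empty total intersection, so by the finite intersection property some $F_1 \cap \cdots \cap F_N$ lies inside $V$; hence $\{F_1 \cap \cdots \cap F_n\}_n$ is a local base at $0$. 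Translation then yields first countability of $X$ as a topological group, and the Birkhoff--Kakutani theorem delivers metrizability. This metrization step is the main, though entirely classical, ingredient of the argument.
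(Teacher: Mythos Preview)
Your proposal is correct and follows the same line as the paper, which in fact gives no separate proof but only the brief justification embedded in the statement of Fact~\ref{l21} itself. You supply the details the paper leaves implicit: that each $\ker v_n$ is $G_\delta$ (via the preimage of the $G_\delta$-point $\{0\}\subseteq\T$), and the standard compactness/Birkhoff--Kakutani argument deducing metrizability of $X$ from $\{0\}$ being $G_\delta$; the paper simply takes this last implication for granted.
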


According to the next theorem, the $G_\delta$-subgroups of a compact abelian group $X$ are precisely the closed characterized subgroups of $X$. 

\begin{teo}[{\cite[Theorem A]{DG}}]\label{TGchar} Let $X$ be a compact abelian group.
\begin{itemize}
     	 \item[(a)] $\SG(X)\subseteq\{\text{closed subgroups of }X\}$.
     	\item[(b)] $\SG(X)\subseteq\CH(X)$; more precisely, a closed subgroup $H$ of $X$ is characterized if and only if $H$ is a $G_\delta$-subgroup. 
\end{itemize}	
\end{teo}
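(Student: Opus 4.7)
The plan is to exploit the standard dictionary between closed $G_\delta$-subgroups of a compact abelian group $X$ and metrizable Hausdorff quotients, combined with one Baire-category step (Pettis' theorem) to handle (a).

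For \textbf{(a)}, I would fix $H = \bigcap_n U_n \in \SG(X)$ and, after replacing $X$ by $\overline{H}$, reduce to showing that a dense $G_\delta$-subgroup must equal the ambient group. Each $U_n \supseteq H$ is then dense, so $H$ itself is a dense $G_\delta$, hence comeager in the Baire space $X$. Being Borel, $H$ has the Baire property, so Pettis' theorem applied to the non-meager set $H$ gives that $H-H$ contains a neighborhood of $0$. Since $H$ is a subgroup, $H-H=H$, so $H$ is an open subgroup; open subgroups are also closed, and a dense closed subgroup must equal $X$.

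For \textbf{(b)}, ``$\Leftarrow$'': given a closed $G_\delta$-subgroup $H$, the compact Hausdorff quotient $X/H$ has $\{0\}$ as a $G_\delta$-set (its preimage $H$ is $G_\delta$ and $q$ is open), hence $X/H$ is metrizable and $\widehat{X/H}$ is countable. I would enumerate $\widehat{X/H}$ as $(\chi_n)_{n\in\N}$ with every character appearing infinitely often, set $v_n := \chi_n \circ q$, and observe that $v_n(x)\to 0$ forces $\chi(q(x))=0$ for each $\chi \in \widehat{X/H}$ (along the subsequences on which $\chi_n = \chi$); since characters of a compact abelian group separate points, $q(x)=0$, so $x\in H$ and hence $H = \sv(X)$. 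For ``$\Rightarrow$'': if $H$ is closed with $H = \sv(X)$, set $K := \bigcap_n \ker v_n$. Each $\ker v_n$ is a $G_\delta$-set in $X$ (preimage of the $G_\delta$-set $\{0\}\subset \T$), so $K$ is a closed $G_\delta$-subgroup contained in $H$, and therefore $X/K$ is compact metrizable. The image $H/K$ is closed in $X/K$ (as $H = q^{-1}(H/K)$) and so is $G_\delta$ in the metric space $X/K$; pulling back along $q$ yields $H \in \SG(X)$.

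The main subtlety is the Pettis step in (a): a $G_\delta$-subgroup need not be visibly ``large'' in any measure-theoretic sense, so Baire category is the natural tool to promote it to an open set. Once (a) is in place, both directions of (b) reduce to the standard equivalence, for closed $H$, between $H \in \SG(X)$ and metrizability of $X/H$, with the characterizing sequence obtained by enumerating the countable dual $\widehat{X/H}$ with infinite repetitions.
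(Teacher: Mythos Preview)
The paper does not supply its own proof of this theorem; it is quoted from \cite{DG}. So there is no in-paper argument to compare against, and I assess your proposal on its own merits.

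Your overall strategy is sound. Part (a) via Pettis is clean: a compact Hausdorff group is Baire, a dense $G_\delta$-subgroup is comeager with the Baire property, so $H-H=H$ is a neighborhood of $0$; hence $H$ is open, closed, and equals $\overline{H}$. The ``$\Rightarrow$'' half of (b) is correct and uses exactly the device the paper records as Fact~\ref{l21}: $K=K_\vs=\bigcap_n\ker v_n$ is a closed $G_\delta$-subgroup, $H/K$ is closed in the metric space $X/K$, hence $G_\delta$, and $H=q^{-1}(H/K)$ is $G_\delta$.

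There is, however, one genuine soft spot, used twice (in ``$\Leftarrow$'' of (b) for $H$, and in ``$\Rightarrow$'' for $K$): you assert that if a closed subgroup $N$ is $G_\delta$ in $X$ then $\{0\}$ is $G_\delta$ in $X/N$, justifying this by ``$q$ is open''. Openness of $q$ does not suffice. If $N=\bigcap_n U_n$, then $q^{-1}\bigl(\bigcap_n q(U_n)\bigr)=\bigcap_n(U_n+N)$, and there is no formal reason this equals $N$. What rescues the step is compactness: after arranging $U_{n+1}\subseteq U_n$ and $\overline{U_{n+1}}\subseteq U_n$ (normality), the $U_n$ form a neighborhood base of the compact set $N$ in $X$; indeed, any open $V\supseteq N$ must contain some $U_n$, for otherwise a cluster point of points $x_n\in U_n\setminus V$ would lie in $\bigl(\bigcap_n\overline{U_n}\bigr)\setminus V=N\setminus V=\varnothing$. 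Applying this with $V=N+W$ for each neighborhood $W$ of $0$ gives $\bigcap_n(U_n+N)\subseteq\bigcap_W(N+W)=\overline{N}=N$, hence $\bigcap_n q(U_n)=\{0\}$. With this compactness argument inserted, $X/N$ is first countable, hence metrizable by Birkhoff--Kakutani, and the rest of your proof (countable dual, enumeration with infinite repetitions, separation of points by characters) goes through.
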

	
\begin{rem} Note that if $X$ is metric then the inclusion of item (a) of the theorem becomes an equality. In case $X = \T$, item (b) of the above theorem is an easy consequence of item (a), as the closed subgroups of $\T$ are precisely the finite ones and they have the form $\T[m]=\{x\in\T:mx=0\}$ for some $m\in\N$ ($\T[m]$ is trivially characterized by the constant sequence $\us=(m)_{n\in\N}$).
\end{rem}
		
For a sequence $\vs$ of characters of a compact abelian groups $X$ the subgroup $\sv(X)/K_\vs$ of $X/K_\vs$ is characterized. More generally one has: 
	
\begin{teo}[{\cite[Theorem B]{DG}}]\label{ThB} For a subgroup $H$ of a compact abelian group $X$ 
the following are equivalent:
\begin{itemize}
     	 \item[(a)] $H$ is characterized;
     	 \item[(b)] $H$ contains a closed (necessarily characterized) $G_\delta$-subgroup $K$ of $X$ such that $H/K$ is a characterized subgroup of the compact metrizable group $X/K$. 
	 \end{itemize}
	 \end{teo}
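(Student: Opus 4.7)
The plan is to exploit the natural kernel $K_\vs := \bigcap_n \ker v_n$ associated to a characterizing sequence, as introduced in Fact~\ref{l21}, together with the canonical correspondence between characters of $X$ vanishing on a closed subgroup $K$ and characters of the quotient $X/K$.

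For the implication (a) $\Rightarrow$ (b), suppose $H = \sv(X)$ for a sequence $\vs = (v_n)$ of characters, and set $K := K_\vs$. By Fact~\ref{l21}, $K$ is a closed $G_\delta$-subgroup of $X$ contained in $H$; Theorem~\ref{TGchar}(b) then yields the parenthetical assertion that $K$ is itself characterized. Because $K$ is a closed $G_\delta$ in the compact group $X$, the identity is $G_\delta$ in $X/K$, so the quotient $X/K$ is compact metrizable. Each $v_n$ vanishes on $K$ and hence factors through the quotient map $\pi : X \to X/K$ as a character $\bar v_n$ of $X/K$ satisfying $\bar v_n(x+K) = v_n(x)$. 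Writing $\bar{\vs} = (\bar v_n)$, the condition $v_n(x) \to 0$ translates to $\bar v_n(x+K) \to 0$, which gives $H/K = s_{\bar{\vs}}(X/K)$, so $H/K$ is a characterized subgroup of $X/K$.

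For (b) $\Rightarrow$ (a), suppose $K \subseteq H$ is a closed $G_\delta$-subgroup of $X$ and $H/K$ is characterized in $X/K$ by a sequence $\bar{\vs} = (\bar v_n)$ of characters of $X/K$. Lifting each $\bar v_n$ to the character $v_n := \bar v_n \circ \pi$ of $X$, one has $v_n(x) = \bar v_n(x+K)$, whence $v_n(x) \to 0$ if and only if $x+K \in s_{\bar{\vs}}(X/K) = H/K$, and since $K \subseteq H$ this last condition is equivalent to $x \in H$. Therefore $H = \sv(X)$ with $\vs = (v_n)$, and $H$ is characterized.

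The argument amounts to bookkeeping with the correspondence, under Pontryagin duality, between the short exact sequence $K \hookrightarrow X \twoheadrightarrow X/K$ and its character groups, so no substantial obstacle arises. The real content of the theorem lies in identifying $K_\vs$ as the canonical closed $G_\delta$-witness that makes the decomposition in (b) work, and in invoking metrizability of $X/K$ so that (b) is a meaningful statement about a compact metrizable group.
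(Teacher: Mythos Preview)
Your argument is correct and is precisely the natural one. Note, however, that the present paper does not supply its own proof of this theorem: it is quoted from \cite[Theorem~B]{DG} and stated without proof, so there is no in-paper argument to compare against. That said, your approach---taking $K=K_\vs=\bigcap_n\ker v_n$ for (a)$\Rightarrow$(b) and lifting characters along the quotient map for (b)$\Rightarrow$(a)---is exactly the canonical route, and is undoubtedly the argument the authors of \cite{DG} had in mind. One minor remark: the step ``$K$ is a closed $G_\delta$ in $X$, hence $\{0\}$ is $G_\delta$ in $X/K$'' is a standard fact about compact groups, but since you have the concrete description $K=\bigcap_n\ker v_n$ you could bypass it entirely by observing that the induced characters $\bar v_n$ separate points of $X/K$, giving a continuous injection of the compact group $X/K$ into $\T^\N$ and hence metrizability directly.
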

	 
The previous theorem, reduces the study of characterizable subgroups of compact abelian groups, to the study of these subgroups in compact metrizable abelian groups. The general case of Theorem \ref{ThB}, namely the case of an arbitrary abelian group $X$, is considered in \cite{Ga5}.

Following \cite{DG}, call a subgroup $H$ of a compact abelian group $X$ {\em countable modulo compact} (briefly, {\em CMC}) if $H$ has a compact subgroup $K$ such that $H/K$ is countable and $K$ is a $G_\delta$-set of $X$. Clearly, CMC subgroups are $F_\sigma$, but they are also characterized subgroups: 
  
\begin{cor}[{\cite[Corollary B1]{DG}}]\label{CDKB} Let $ X$  be a compact abelian group and  let  $H$  be a CMC subgroup of  $X$. 
Then $H$ is characterized. 
\end{cor}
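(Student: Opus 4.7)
The plan is to observe that the hypothesis \emph{CMC} has been tailored precisely to fit the two-step reduction supplied by Theorem~\ref{ThB} combined with Theorem~\ref{ThmDK}. Unpacking the definition, $H$ contains a compact subgroup $K$ that is a $G_\delta$-subset of $X$, and the quotient $H/K$ is countable. Since $K$ is compact and $X$ is Hausdorff, $K$ is automatically closed, so $K$ is a closed $G_\delta$-subgroup of $X$. This gives exactly the kind of subgroup $K$ demanded by condition (b) of Theorem~\ref{ThB}.

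Next I would invoke the standard fact that a compact abelian group whose identity is a $G_\delta$-set is metrizable, applied to the quotient $X/K$: the image of $K$ is the identity of $X/K$, and it is $G_\delta$ there because $K$ is $G_\delta$ and saturated in $X$. Hence $X/K$ is a compact metrizable abelian group. Now $H/K$ is a countable subgroup of $X/K$, and Theorem~\ref{ThmDK} (of D\'\i az--Khukhro and Barbieri--Spiezia--Weber) asserts that every countable subgroup of a compact metrizable abelian group is characterized. Therefore $H/K$ is a characterized subgroup of $X/K$.

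At this point both hypotheses of Theorem~\ref{ThB}(b) are verified with the same $K$: it is a closed $G_\delta$-subgroup of $X$ contained in $H$, and $H/K$ is characterized in the compact metrizable quotient $X/K$. Applying the implication (b)$\Rightarrow$(a) of Theorem~\ref{ThB} concludes that $H$ itself is characterized in $X$.

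The proof has no real obstacle; the content of the corollary lies entirely in the two theorems being quoted, and the CMC hypothesis is what allows them to be chained together. The only small points to check are the auxiliary observations that compactness forces $K$ to be closed and that $G_\delta$-ness of $K$ in $X$ descends to $G_\delta$-ness of $\{K\}$ in $X/K$, both of which are routine.
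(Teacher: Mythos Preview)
Your argument is correct and follows exactly the route the paper indicates: chain Theorem~\ref{ThmDK} with the implication (b)$\Rightarrow$(a) of Theorem~\ref{ThB}, using that the CMC hypothesis supplies precisely the closed $G_\delta$-subgroup $K$ and the countable quotient $H/K$ needed. One minor correction: Theorem~\ref{ThmDK} is due to Dikranjan--Kunen \cite{DK} and Beiglb\"ock--Steineder--Winkler \cite{BSW}, not the authors you name.
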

	
This follows easily from Theorems \ref{ThmDK} and \ref{ThB}  (see Proposition \ref{new:prop} for a stronger result). The above corollary was inspired by the following theorem. The subgroups characterized in the following theorem are particular CMC subgroups (in fact they are \emph{countable torsion modulo a compact subgroup}.

\begin{teo}[{\cite[Theorem 1.5]{DK}}]
 Let $X$ be a compact abelian group and $\{F_n\}_{n\in\N}$ a family of closed subgroups of $X$ such that, $F_n\lneq F_{n+1}$ for all $n\in\N$. Then $H=\bigcup F_n\in\CH(X)$ if and only if there exists $m\in\N$ such that $X/F_m$ is metrizable and $|F_{n+1}:F_n|<\infty$ for all $n\ge m$.
\end{teo}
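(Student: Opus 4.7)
The plan has two directions. For sufficiency, suppose there is an $m$ with $X/F_m$ metrizable and $|F_{n+1}:F_n|<\infty$ for every $n\ge m$. Since $X$ is compact, metrizability of $X/F_m$ is equivalent to $F_m$ being a closed $G_\delta$-subgroup of $X$. Multiplicativity of the index gives $|F_n:F_m|<\infty$ for every $n\ge m$, so $H/F_m=\bigcup_{n\ge m}F_n/F_m$ is a countable union of finite sets and hence countable. Thus $H$ is CMC, and Corollary \ref{CDKB} yields $H\in\CH(X)$.

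For the necessity direction, suppose $H=\sv(X)$ for some sequence of characters $\vs=(v_n)$, and let $K:=K_\vs=\bigcap_n\ker v_n$, a closed $G_\delta$-subgroup of $X$ contained in $H$. I first locate $m$ with $X/F_m$ metrizable. Writing $K=\bigcup_n(K\cap F_n)$ expresses the compact (hence Baire) group $K$ as an increasing union of closed subgroups, so by Baire category some $K\cap F_{m_1}$ has non-empty interior in $K$ and is therefore an open subgroup of finite index. The ascending chain $(K\cap F_n)_{n\ge m_1}$ then lies in the finite quotient $K/(K\cap F_{m_1})$, so it stabilizes at some $m_2\ge m_1$; since $K\subseteq\bigcup_n F_n$, this stabilization forces $K\subseteq F_{m_2}$. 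Hence for every $m\ge m_2$, $X/F_m$ is a Hausdorff quotient of the compact metrizable group $X/K$, and so is itself metrizable.

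To obtain the finite-index condition, I invoke the fact that every characterized subgroup of a compact abelian group is Polishable (this is the topic of \S 2.2): $H$ admits a finer Polish group topology $\tau$ such that the inclusion $(H,\tau)\hookrightarrow X$ is continuous. Each $F_n$, being closed in $X$, is closed in $(H,\tau)$. Baire category in the Polish (hence Baire) group $(H,\tau)$, applied to the increasing union $H=\bigcup_n F_n$, produces some $m_3$ with $F_{m_3}$ having non-empty interior in $(H,\tau)$, i.e.\ open in $(H,\tau)$. The quotient $H/F_{m_3}$ is then simultaneously discrete and Polish, hence countable. Taking $m:=\max(m_2,m_3)$ gives $|H:F_m|\le\aleph_0$; for every $n\ge m$ the group $F_n/F_m$ is countable compact Hausdorff, hence finite, so $|F_{n+1}:F_n|<\infty$ for all $n\ge m$, as required.

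The principal obstacle is the finite-index step in the necessity direction: it genuinely needs a Baire-space structure on $H$ itself, which is exactly Polishability. The metrizability step is easier, because the compact subgroup $K_\vs$ already supplies one. Avoiding Polishability would mean running Baire category directly on the $F_{\sigma\delta}$-presentation $\sv(X)=\bigcap_M\bigcup_k\bigcap_{n\ge k}\{x:\lb v_n(x)\rb\le 1/M\}$, which amounts to re-proving Polishability by hand.
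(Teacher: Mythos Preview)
The paper does not prove this theorem; it merely quotes it from \cite{DK} and remarks that the subgroups appearing in it are CMC. So there is no ``paper's own proof'' to compare against, and I evaluate your argument on its own.

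Your sufficiency direction is fine and matches the paper's own remark that such $H$ are CMC; Corollary~\ref{CDKB} then applies directly.

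Your necessity direction is essentially correct, but there is one genuine gap. You write that ``every characterized subgroup of a compact abelian group is Polishable (this is the topic of \S\ref{SPol})'', and then run Baire category on $(H,\tau)$. However, Theorem~\ref{Tpolgen}(a) is stated only for compact \emph{metrizable} $X$, and indeed Polishability (Definition~\ref{DPol}) is formulated for subgroups of Polish groups; the ambient $X$ in the statement need not be metrizable. Fortunately you have already done the work to repair this: once you have $K=K_\vs\subseteq F_{m_2}$ with $X/K$ metrizable, pass to $X':=X/K$, $H':=H/K$, $F_n':=F_n/K$ for $n\ge m_2$. By Theorem~\ref{ThB}, $H'$ is characterized in the compact metrizable group $X'$, so Theorem~\ref{Tpolgen}(a) now applies and gives a Polish topology on $H'$. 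Your Baire argument then produces $m_3\ge m_2$ with $F_{m_3}'$ open in $H'$, hence $|H:F_{m_3}|=|H':F_{m_3}'|\le\aleph_0$, and the rest of your proof goes through unchanged.

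As for approach: the original \cite{DK} argument predates Gabriyelyan's general Polishability theorem, so it cannot have invoked Theorem~\ref{Tpolgen} as a black box; your route via Polishability is therefore a streamlined alternative that trades a direct construction for an appeal to the abstract Polish/Baire machinery. The gain is brevity; the cost is a heavier prerequisite.
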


 One can prove that every characterized subgroup $H$ of $X$ is CMC if and only if $X$ has finite exponent \cite{DG}.

A natural question arising from Corollary \ref{CDKB} is the following: 

\begin{que}
When an $F_\sigma$-subgroup $H$ of a compact metrizable abelian group $X$ is characterized?
\end{que}

The next theorem sharpening Remark \ref{rem1} was proved in \cite{DG}: 

\begin{teo}\label{teoDG1} {\rm \cite{DG}} For every infinite compact abelian group $X$, the following inclusions hold:
\begin{equation}\label{*}
 \SG(X) \subsetneqq \CH(X) \subsetneqq \SFF(X)\ \  \mbox{ and } \  \  \SF(X) \not \subseteq  \CH(X).
\end{equation}
If in addition $X$ has finite exponent, then
\begin{equation}\label{dag}
 \CH(X)  \subseteq \SF(X).
\end{equation}
\end{teo}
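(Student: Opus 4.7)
The plan is to treat the four parts of the theorem separately, with the finite-exponent inclusion being the only one admitting a fully elementary proof. For the leftmost containment $\SG(X)\subseteq\CH(X)$, I appeal directly to Theorem~\ref{TGchar}(b), combined with part (a) of the same theorem which tells us that every $G_\delta$-subgroup is automatically closed. To establish strictness, I would exhibit a characterized non-closed (hence non-$G_\delta$) subgroup. If $X$ is metrizable, any countable dense subgroup $D$ works by Theorem~\ref{ThmDK}, since density in the infinite $X$ prevents $D$ from being closed. When $X$ is non-metrizable, I first extract a closed $G_\delta$-subgroup $K$ with $X/K$ infinite metrizable by setting $K=\bigcap_n\ker\chi_n$ for a countable family $\{\chi_n\}\subseteq\widehat X$ generating an infinite subgroup; the metrizable case inside $X/K$ yields a characterized proper countable dense $D/K$, and Theorem~\ref{ThB} lifts this to a characterized non-closed $D\leq X$.

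The containment $\CH(X)\subseteq\SFF(X)$ is precisely Remark~\ref{rem1}. For both the strictness $\CH(X)\subsetneqq\SFF(X)$ and the non-inclusion $\SF(X)\not\subseteq\CH(X)$, it suffices (since $\SF(X)\subseteq\SFF(X)$) to exhibit an $F_\sigma$-subgroup of $X$ that is not characterized. Here I would invoke the Farah--Solecki theorem \cite[Theorem 2.1]{FS} cited in the introduction, sharpened through the Polishability obstruction previewed in \S 2.2: a characterized subgroup necessarily carries a Polish group topology refining the subspace topology, and a suitably constructed $F_\sigma$-subgroup of an uncountable compact metrizable abelian group can be arranged to fail this condition. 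The non-metrizable case is then handled by the same lifting technique through a closed $G_\delta$-subgroup $K$ as above, observing that preimages under the quotient map preserve the $F_\sigma$-property. This is where I expect the principal obstacle to lie, since the Farah--Solecki result as stated furnishes only a Borel non-characterized subgroup, whereas we need one whose Borel complexity is as low as $F_\sigma$; the argument therefore depends on the finer Polishability analysis rather than a generic Borel hierarchy consideration.

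Finally, assume $X$ has finite exponent $m$, so that $v(X)\subseteq\T[m]$ for every character $v$ of $X$. Since $\T[m]$ is a finite subgroup of $\T$ and $0$ is isolated in it, the convergence $v_n(x)\to 0$ in $\T$ is equivalent to $v_n(x)=0$ for all $n$ sufficiently large. Hence, for any sequence $\vs=(v_n)$ of characters,
\begin{equation*}
\sv(X)\;=\;\bigcup_{N\in\N}\bigcap_{n\geq N}\ker v_n,
\end{equation*}
a countable union of closed subgroups and therefore an $F_\sigma$-subgroup of $X$. This concludes the plan, with the real difficulty concentrated in the two strictness statements of \eqref{*}, whose proofs rest on the finer Polishability analysis of characterized subgroups to be developed in the remainder of the paper.
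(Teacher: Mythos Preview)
The paper does not give its own proof of this theorem; it is quoted from \cite{DG}. Nonetheless the surrounding text furnishes most of the ingredients you invoke, so your plan matches what the paper makes available: Theorem~\ref{TGchar} for $\SG(X)\subseteq\CH(X)$, Remark~\ref{rem1} for $\CH(X)\subseteq\SFF(X)$, and Theorems~\ref{ThmDK} and~\ref{ThB} for the strictness of the first inclusion via a countable dense subgroup lifted through a metrizable quotient. Your argument for the finite-exponent inclusion $\CH(X)\subseteq\SF(X)$ is complete, correct, and is the standard one.

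Where your plan remains deliberately vague is exactly where the paper, later in \S\ref{SPol}, supplies the concrete construction. For $\SF(X)\not\subseteq\CH(X)$ the witness is not obtained from Farah--Solecki but from the subgroup $\langle K\rangle$ generated by an uncountable Kronecker set $K$: this is an $F_\sigma$-subgroup that fails to be Polishable, hence is not characterized (Theorems~\ref{BiroTh} and~\ref{Tpolgen}(b)). You correctly anticipate that Polishability is the obstruction and that Farah--Solecki alone is insufficient; what your proposal lacks is the name of the witness. A genuine residual gap, however, is that the Kronecker-set argument as stated in the paper presupposes the existence of an uncountable Kronecker set in (a metrizable quotient of) $X$, and this is not available in every infinite compact abelian group: in a group of finite exponent every character takes only finitely many values and cannot approximate arbitrary continuous $\T$-valued functions, so no nontrivial Kronecker set exists. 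The full proof in \cite{DG} must therefore handle such groups by a separate construction, and neither your plan nor the present paper spells that step out.
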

Clearly, one has $\CH(X)  \subsetneqq \SF(X)$ in (\ref{dag}), due to the second part of (\ref{*}).

Gabriyelyan \cite{Ga2} proved that the implication in the final part of the above Theorem can be inverted:

\begin{teo}[\cite{Ga2}]\label{ThGa} $\CH(X) \subseteq  \SF(X)$ for a compact abelian group $X$ if and only if $X$ has finite exponent.\end{teo}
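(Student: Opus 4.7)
The plan is to prove the contrapositive of the nontrivial ``only if'' direction (the ``if'' part is exactly inclusion (\ref{dag}) of Theorem~\ref{teoDG1}): assuming $X$ has infinite exponent, I construct $H\in\CH(X)\setminus\SF(X)$.

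First I would reduce to the compact metrizable case. Since $X$ has infinite exponent, so does its discrete Pontryagin dual $\widehat X$, and hence $\widehat X$ contains a countable subgroup $D$ of infinite exponent, generated either by a single character of infinite order or by a sequence of characters whose orders tend to infinity. Set $K:=\bigcap_{\chi\in D}\ker\chi$; then $K$ is a closed $G_\delta$-subgroup of $X$, the quotient $Y:=X/K$ is compact metrizable (its dual is essentially $D$), and $Y$ still has infinite exponent. If $H_0\in\CH(Y)\setminus\SF(Y)$, then its preimage $H:=\pi^{-1}(H_0)$ under the quotient map $\pi\colon X\to Y$ is characterized in $X$ (by the pullbacks $v_n\circ\pi$ whenever $\vs$ characterizes $H_0$), and $H\in\SF(X)$ would force $H_0=\pi(H)\in\SF(Y)$, since $\pi$ is a continuous closed surjection between compact spaces and therefore carries $F_\sigma$ sets to $F_\sigma$ sets. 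So it suffices to produce $H_0$ for compact metrizable $Y$ of infinite exponent.

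Next I would split according to the torsion structure of $\widehat Y$. If $\widehat Y$ has an element of infinite order, then that character is a continuous epimorphism $\varphi\colon Y\to\T$; I would pull back an uncountable characterized subgroup of $\T$ that fails to be $F_\sigma$. The existence of such a subgroup is exactly what the paper announces for \S 3 (the $F_\sigma$ characterized subgroups of $\T$ arising from sequences $\uuu$ with $u_n\mid u_{n+1}$ are precisely the countable ones, so, e.g., $u_n=n!$ yields a concrete example), and the closed-surjection argument above transports the non-$F_\sigma$-ness to $Y$. If instead $\widehat Y$ is torsion of unbounded exponent, a primary decomposition of $\widehat Y$ delivers a continuous epimorphism of $Y$ onto a group of the form $Z=\prod_n \Z(p_n^{k_n})$ with $p_n^{k_n}\to\infty$, inside which I would build a ``factorial-base'' style sequence of characters whose characterized subgroup is uncountable and non-$F_\sigma$, and then pull back.

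I expect the main obstacle to be this second sub-case: producing an explicit characterized non-$F_\sigma$ subgroup of $Z$ and certifying that it is not $F_\sigma$. The natural lever is the criterion developed in \S 2.4, which reformulates ``$H\in\SF(X)$'' for characterized $H$ as a $\sigma$-compactness-type property of a suitable modification of the refined Polish topology $\tau(H)$, together with the fact stated just after Theorem~\ref{teoDG1} that every characterized $H\le X$ is CMC if and only if $X$ has finite exponent. The construction of $H_0$ therefore has to be arranged to fail CMC — no closed $G_\delta$-subgroup $K'$ of $Y$ (respectively $Z$) makes $H_0/K'$ countable — and it is precisely at this point that the infinite-exponent assumption is used in an essential way.
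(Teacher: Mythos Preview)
The paper does not itself prove this theorem; it is quoted from \cite{Ga2}, and the only hint offered is the sentence that ``the proof of the above theorem uses a relevant property of characterized subgroups \ldots\ namely they are Polishable.'' So there is no in-paper argument to match against; what follows assesses your outline on its own terms.

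Your ``if'' direction, the reduction to a metrizable quotient $Y$, and Case~1 (a character of infinite order gives a surjection $Y\twoheadrightarrow\T$, and one pulls back Example~\ref{Exa_NonFS}) are all sound. The gap is Case~2. First, the structural step is wrong as stated: a countable torsion group of unbounded exponent need not contain $\bigoplus_n\Z(p_n^{k_n})$ with $p_n^{k_n}\to\infty$ as a subgroup --- take $\widehat Y=\Z(p^\infty)$, all of whose proper subgroups are finite cyclic, so $Y\cong\J_p$ admits no surjection onto a product of the kind you describe and must be treated separately. Second, even on a target $Z=\prod_n\Z(p_n^{k_n})$ you supply neither the construction of $H_0$ nor the verification that $H_0\notin\SF(Z)$; you flag this yourself as the obstacle. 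Third, the CMC remark at the end does not help close it: ``not CMC'' is only necessary, not sufficient, for ``not $F_\sigma$'' (combine (\ref{eqdag}) with Theorem~\ref{new:prop1} to obtain $F_\sigma$ characterized subgroups that are not CMC), so arranging $H_0$ to fail CMC proves nothing about $\SF$. Whatever route you take --- the test-topology criterion of Theorem~\ref{ThC} or the Polishability mechanism the paper alludes to --- Case~2 still needs an explicit construction and an explicit non-$F_\sigma$ certificate.
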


In other words, for every compact abelian group of infinite exponent, he produced a characterized subgroup that is not $F_\sigma$.
The first example of a characterized subgroup of $\T$ that  is not an $F_\sigma$-subgroup of $\T$ was given by Bukovsk\'y, Kholshevikova, Repick\'y \cite{BKR} (see Example \ref{Exa_NonFS} in \S \ref{secaseq}). 

The above discussion motivates the following general question 

\begin{que}
When uncountable characterized subgroups of the compact metrizable groups are $F_\sigma$? 
\end{que}

This issue will be discussed in \ref{FS}, while the case of characterized subgroups of $\T$ is discussed in more detail in \S \ref{Toro}. 

The proof of the above theorem uses a relevant  property of characterized subgroups, noticed first by Bir\'o \cite{B} and independently, 
by Gabriyelyan \cite{Ga1}, \cite{Ga2}. Namely they are Polishable, as we see in the next section.


\subsection{Polishability}\label{SPol}

Let us recall the notion of Polishable subgroup that plays a key role in the study of characterized subgroups. It was introduced in \cite{KL}.

\begin{defin}\label{DPol}
	A {\em Polishable subgroup} $H$ of a Polish group $G$ is a subgroup that satisfies one of the following equivalent conditions: 
	\begin{itemize}
		\item[(a)] there exists a Polish group topology $\tau$ on $H$ having the same Borel sets as $H$ when considered as a topological subgroup of $G$; 
		\item[(b)] there exists a continuous isomorphism from a Polish group $P$ to $H$; 
		\item[(c)] there exists a continuous surjective homomorphism from a Polish group $P$ onto $H$. 
	\end{itemize}
\end{defin}

The topology witnessing the polishability of a subgroup is unique \cite{S}.

Answering negatively the first named author's question on whether $\F(\T)$ is contained in $\CH(\T)$, Bir\'o proved in 2005 the more precise Theorem \ref{BiroTh}. Before stating the theorem, let us recall the definition of Kronecker set.

\begin{defin}
 A non empty compact subset $K$ of an infinite compact metrizable abelian group $X$ is called a \emph{Kronecker set}, if for every continuous function $f:K\to\T$ and $\varepsilon>0$ there exists a character $v\in\widehat{X}$ such that
 \begin{equation*}
  \max\left\{\lb f(x)-v(x)\rb:x\in K\right\}<\ep.
 \end{equation*}
\end{defin}

\begin{notation}
 If $X$ is a group and $A\subseteq X$, denote by $\langle A\rangle$ the subgroup of $X$ generated by $A$.
\end{notation}

\begin{teo}[{\cite[Theorem 2]{B}}]\label{BiroTh}
	If $K$ is an uncountable Kronecker set of $\T$, then $\langle K\rangle\in\SF(\T)\setminus\CH(\T)$.
\end{teo}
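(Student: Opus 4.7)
The plan separates showing $\langle K\rangle \in \SF(\T)$ and $\langle K\rangle \notin \CH(\T)$. The first is immediate: with $K^\pm := K \cup (-K) \cup \{0\}$ (compact), the $n$-fold sumset $S_n := K^\pm + \cdots + K^\pm$ (with $n$ summands) is the image of the compact $(K^\pm)^n$ under the continuous addition map, hence closed in $\T$, and $\langle K\rangle = \bigcup_{n\ge 1}S_n$ is $F_\sigma$.

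For the non-characterization I would argue by contradiction: suppose $\langle K\rangle = \sv(\T)$ for some sequence $\vs=(v_n)$ of integers. A short reduction (discard the zero terms, which do not affect $\sv(\T)$; if what remains is bounded, pigeonhole yields a constant nonzero subsequence $v_{n_k}=m$, forcing $\sv(\T)\subseteq\T[m]$ finite, incompatible with $|K|=\cc$) lets one assume $|v_n|\to\infty$. Next, I would invoke the Polishability of characterized subgroups, developed in \S\ref{SPol}: $\langle K\rangle$ carries a Polish group topology $\tau$, finer than the topology induced from $\T$ and with the same Borel sets, under which $(\langle K\rangle,\tau)$ is Polish and the inclusion into $\T$ is continuous. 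The crucial step is to use the Kronecker property to prove that $\tau|_K$ coincides with the induced topology on $K$, so that $K$ is $\tau$-compact. Granting this, each $S_n$ is the continuous $\tau$-image of a compact power of $K^\pm$, hence $\tau$-compact, making $(\langle K\rangle,\tau)$ $\sigma$-compact Polish and so, by Baire category, locally compact. To close, a Kronecker set is disjoint from the torsion subgroup of $\T$ (at a torsion point every character takes values in a finite subset of $\T$, which precludes uniform approximation of a generic $f\in C(K,\T)$); in fact $K$ is $\Z$-independent, so $\langle K\rangle$ is torsion-free and a \emph{proper} subgroup of $\T$. Appealing to the classification stated in the outline of \S\ref{Toro} --- proper characterized subgroups of $\T$ whose associated Polish topology is locally compact are exactly the countable ones --- we conclude $|\langle K\rangle|\le\aleph_0$, contradicting $|K|=\cc$.

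The main obstacle is identifying $\tau|_K$ with the induced topology on $K$. The topology $\tau$ tracks the tail behaviour of the sequence $(v_n x)_n$, and pointwise convergence $v_n x\to 0$ on a compact set does not automatically imply the uniformity that $K$ being $\tau$-compact requires. The Kronecker property --- uniform density of the integer characters in $C(K,\T)$ --- is precisely what provides the equicontinuity-type control over $\{v_n|_K\}$ that pins down $\tau|_K$, and turning this into a rigorous deduction is where essentially all of the nontrivial work lies.
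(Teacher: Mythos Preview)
Your $F_\sigma$ argument is correct. For the non-characterization, note first that the paper does not supply its own proof but summarizes Bir\'o's: characterized subgroups of $\T$ are Polishable, whereas $\langle K\rangle$ is shown \emph{directly} to be non-Polishable when $K$ is an uncountable Kronecker set (this is item~(b) of Theorem~\ref{Tpolgen} in Gabriyelyan's generality). Your route is genuinely different---you accept Polishability and try to force the Polish topology $\tau$ to be locally compact so as to invoke Theorem~\ref{ThD}---but the decisive step contains a real gap.

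Saying that $\tau|_K$ equals the $\T$-induced topology on $K$ amounts to asserting that the characterizing sequence satisfies $v_n|_K\to 0$ \emph{uniformly} on $K$, i.e., that $\{v_n|_K : n\in\N\}$ is equicontinuous. Your heuristic reads the Kronecker property backwards: it says the whole family $\{w|_K : w\in\Z\}$ is \emph{dense} in $C(K,\T)$, which by Arzel\`a--Ascoli is the antithesis of equicontinuity, and it imposes no constraint on the uniform behaviour of one particular pointwise-null subsequence $(v_n)$. Worse, the implication ``$K$ Kronecker $\Rightarrow\ \tau|_K=$ induced'' already fails for countable $K$: a countably infinite compact Kronecker set $K$ (take a non-trivial convergent sequence inside a perfect Kronecker set; closed subsets of Kronecker sets are again Kronecker) has $\langle K\rangle$ countable, hence characterized, and by Theorem~\ref{CorC2} the Polish topology on $\langle K\rangle$ is discrete---so $\tau|_K$ is discrete while the $\T$-topology on the infinite compact $K$ is not. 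Thus the Kronecker hypothesis by itself cannot deliver $\tau$-compactness of $K$; the uncountability of $K$ must enter the argument in an essential way that your outline does not provide. Bir\'o and Gabriyelyan sidestep this entirely by proving non-Polishability of $\langle K\rangle$ directly, without any detour through local compactness or Theorem~\ref{ThD}.
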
        

Inspired by ideas of Aaronson and Nadkarni \cite{AN}, he proved that every characterized subgroup of $\T$ is Polishable, while the subgroup $\langle K\rangle$ generated by any uncountable Kronecker set $K$, is not polishable and hence not characterized. Unaware of this result, Gabriyelyan \cite{Ga1} generalized significantly this theorem in several directions.

\begin{teo}\label{Tpolgen}  Let $X$ be a compact metrizable abelian group. Then
\begin{itemize}
 \item[(a)] {\rm \cite[Theorem 1]{Ga1}} $\sv(X)$ is Polishable for every sequence $\vs$ of characters of $X$;
 \item[(b)] {\rm \cite[Theorem 2]{Ga1}} if $K$ is an uncountable Kronecker set in $X$, then $\langle K\rangle$ is not Polishable; in particular, $\langle K\rangle$ is not characterized.
\end{itemize}
\end{teo}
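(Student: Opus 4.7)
For part (a), the plan is to realize $H := \sv(X)$ as a closed subgroup of a larger, explicit Polish group, and pull back the topology. Let $c_0(\T) := \{(\alpha_n) \in \T^\N : \|\alpha_n\| \to 0\}$ with the complete translation-invariant metric $d((\alpha_n),(\beta_n)) = \sup_n \|\alpha_n - \beta_n\|$; this is a Polish abelian group. Define
\begin{equation*}
\Phi : H \longrightarrow X \times c_0(\T), \qquad \Phi(x) := \bigl(x,(v_n(x))_{n\in\N}\bigr),
\end{equation*}
which is an injective group homomorphism, well-defined precisely by the condition $x\in\sv(X)$. The crux is that $\Phi(H)$ is closed in $X \times c_0(\T)$: if $\Phi(x_k)\to (x,(t_n))$, continuity of each $v_n$ on $X$ gives $t_n = v_n(x)$ for all $n$, and $(t_n)\in c_0(\T)$ then forces $v_n(x)\to 0$, so $x\in H$. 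Pulling back the resulting Polish topology of $\Phi(H)$ along $\Phi$ yields a Polish group topology $\tau(H)$ on $H$, automatically finer than the subspace topology inherited from $X$. To verify condition (a) of Definition \ref{DPol}, I would invoke the Lusin-Souslin theorem: the inclusion $(H,\tau(H))\hookrightarrow X$ is a continuous injection between standard Borel spaces whose image $H$ is Borel in $X$ by Remark \ref{rem1}, hence it is a Borel isomorphism onto its image and $\tau(H)$ induces on $H$ the same Borel structure as the subspace topology.

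For part (b), the approach is by contradiction: assume $H = \langle K\rangle$ admits a Polish group topology $\tau$ with the same Borel sets as the subspace topology. Two structural facts about $K$ drive the argument. First, a Kronecker set is algebraically independent in $X$, so $\langle K\rangle$ is the free abelian group on $K$. Second, the restriction map $\widehat X \to C(K,\T)$ has sup-norm dense image, producing an abundance of subspace-continuous --- and therefore $\tau$-continuous --- characters of $\langle K\rangle$. Combined with the uniqueness of the Polishing topology \cite{S}, this wealth of characters places stringent constraints on $\tau$.

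The final step, and the main obstacle, is converting this abundance into a concrete pathology contradicting completeness of $\tau$. Following Bir\'o's strategy for $X = \T$ (inspired by Aaronson and Nadkarni), the plan is to choose a specific continuous $f : K \to \T$ and a sequence $(v_n)\subseteq\widehat X$ with $v_n|_K \to f$ uniformly, then extract from $f$ a $\tau$-Cauchy sequence of integer combinations of elements of $K$ with no $\tau$-limit in $\langle K\rangle$, contradicting completeness. The uncountability of $K$ is essential: it allows diagonal and independence arguments that fail for countable Kronecker sets, whose generated group is characterized by Theorem \ref{ThmDK}. Designing this pathological $f$ and accompanying sequence is where the real combinatorial work lies; Gabriyelyan's contribution is to carry it out for an arbitrary compact metrizable abelian $X$ rather than just $\T$.
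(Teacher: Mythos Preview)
The paper does not supply its own proof of this theorem; both parts are attributed to Gabriyelyan \cite{Ga1} (with the case $X=\T$ of (b) due earlier to Bir\'o \cite{B}). The only additional content the paper provides is, immediately after the statement, the metric
\[
\varrho_\vs(x,y)=\sup_{n\in\N}\{\delta(x,y),d(v_n(x),v_n(y))\}
\]
as the one generating the Polish topology on $\sv(X)$. Your embedding $\Phi:H\hookrightarrow X\times c_0(\T)$ recovers precisely this: the pullback along $\Phi$ of the max of the metric on $X$ and the sup-metric on $c_0(\T)$ \emph{is} $\varrho_\vs$, and your verification that $\Phi(H)$ is closed is exactly the completeness check for $\varrho_\vs$. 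So for part (a) your argument is correct and coincides with the construction the paper indicates.

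For part (b) what you have written is an outline, not a proof, and you say so yourself. The two structural facts you isolate (a Kronecker set is algebraically independent, and the restriction map $\widehat X\to C(K,\T)$ has uniformly dense image) are correct and are indeed the starting ingredients. But your final paragraph defers the entire substance: you propose to choose a suitable $f:K\to\T$ and manufacture a $\tau$-Cauchy sequence in $\langle K\rangle$ with no limit, then write that ``designing this pathological $f$ and accompanying sequence is where the real combinatorial work lies.'' Since the paper itself offers no argument beyond the citation to \cite{Ga1}, there is nothing here to compare your sketch against; what can be said is that your proposal stops exactly where the difficulty begins. As a proof of (b) it therefore has a genuine gap: nothing you have written so far rules out the possibility that $\langle K\rangle$ carries a compatible Polish topology, and the sentence invoking ``uniqueness of the Polishing topology \cite{S}'' does no work, since at that stage you have not yet produced any candidate Polish topology to which uniqueness could apply.
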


The topology witnessing the polishability of $\sv(X)$ is induced  by the following metric.

\begin{defin}
  Let $X$ be a compact metrizable abelian group, $\delta$ be a compatible invariant metric on $X$ and $\vs=(v_n)$ be a sequence of characters of $X$. Let $x,y\in X$ and
  
  \begin{equation*}
   \varrho_\vs(x,y)=\sup_{n\in\N}\{\delta(x,y),d(v_n(x),v_n(y))\}.
  \end{equation*}
\end{defin}

Denote by $\tau_{\varrho_\vs}$ the topology of $X$ generated by the metric $\varrho_\vs$. By the uniqueness of the Polish topology, the restriction 
$\tau_{\varrho_\us}\restriction_{\su(X)}$ does not depend on $\uuu$, in the sense that $\tau_{\varrho_\us}\restriction_{\su(X)}= \tau_{\varrho_\vs}\restriction_{\sv(X)}$
whenever $\su(X)= \sv(X)$. This is why we denote by $\tau(H)$ the unique Polish topology of a characterized subgroup $H$ of $X$. Let us note here that 
{\em only this restriction} $\tau(H)$  was considered in \cite{B,Ga1}. The topology $\tau_{\varrho_\vs}$  on {\em the whole} group $X$ appeared for the first time only in \cite{DG}.

The assignments $\us \mapsto \su(X)$ and $\us \mapsto \tau_{\varrho_\us}$ give rise to two natural equivalence relations between sequences of characters of $X$. 

\begin{defin}
For two sequences of characters $\uuu$ and $\vvv$ of a compact abelian group $X$ we write 
\begin{itemize}
  \item $\uuu \sim \vvv$, if $\su(X)= \sv(X)$ 
  \item $\uuu \approx \vvv$, if $\tau_{\varrho_\us}= \tau_{\varrho_\vs}$. 
\end{itemize}
\end{defin}

As mentioned above, $\uuu \sim \vvv$ always implies also $\tau_{\varrho_\us}\restriction_{\su(X)}= \tau_{\varrho_\vs}\restriction_{\sv(X)}$,
but we do not know if $\uuu \sim \vvv$ implies $\uuu \approx \vvv$ in general  (see Question \ref{Qtopocoinc}). We shall see below that the answer is positive if the group 
$\su(X)= \sv(X)$ is $F_\sigma$ (Theorem \ref{CorC1}).

Let $\tau$ be the compact metrizable topology on $X$, it is clear that $\tau_{\varrho_\vs}\supseteq\tau$ and hence  $\tau(H) \supseteq \tau\restriction_H$ for $H\in\CH(X)$. 

\begin{rem}\label{11Ott} 
It is important to mention that $\tau(H) = \tau\restriction_H$, if and only if $H$ is closed. Indeed, $\tau\restriction_H$ is Polish if and only if $H\in\G(X)$. By item (a) of Theorem \ref{TGchar} the class $\G(X)$ coincides with class of closed subgroups of $X$. Using the uniqueness of $\tau(H)$, we can conclude that $\tau(H) = \tau\restriction_H$ precisely when $H$ is closed. 
\end{rem}

 In the light of the above mentioned results of Bir\'o and Gabriyelyan, it makes sense to introduce also the notation $\sPol(X)$ for the collection of all Polishable subgroups of $X$. Clearly, Theorem \ref{Tpolgen} (a) can be written briefly as
\begin{equation}\label{Eq1}
	\CH(X)\subseteq\sPol(X).
\end{equation}
One can expect to have equality in (\ref{Eq1}) at least for some infinite compact metrizable abelian groups $X$. That this cannot
occur follows from a general result of Hjorth \cite{Hj}. Answering a question of Farah and Solecki \cite[Question 6.1]{FS}, he 
 established that an uncountable abelian Polish group contains Polishable subgroups of unbounded Borel complexity. According to Remark \ref{rem1}, none of these Polishable subgroups can be characterized. Therefore, 
equality in (\ref{Eq1})  fails for every uncountable abelian Polish group $X$. 

In \cite{Ga4,Ga1} Gabriyelyan produced  compact groups $X$ witnessing $\sPol(X)\nsubseteq\CH(X)$ using  a different idea, since the Polishable subgroup he produced have some additional properties, being among others $F_\sigma$-subgroups. In this way he established  for these particular groups the following stronger non-inclusion
\begin{equation}\label{Eq93}
\sPol(X)\cap\SF(X)\not \subseteq\CH(X). 
\end{equation}

\begin{que}[\cite{Ga6}]\label{QG61}
 Does (\ref{Eq93}) holds for all compact metrizable abelian groups? 
\end{que}

Recently Gabriyelyan proved in \cite{Ga6} that \emph{the non-inclusion (\ref{Eq93}) holds for every compact metrizable non-totally disconnected abelian group}.

   
\subsection{Further properties of the polish topology of a characterized subgroup}\label{FurtherProp}

 A subset $A$ of a topological abelian group $G$ is called {\it quasi-convex} if for every $g\in G\setminus A$ there exists $\chi\in \widehat{G}$ such that
\[
\chi(A)\subset \T_+,\,\,\,\text{but}\,\,\chi(g)\not\in \T_+ \, ,
\]
where  $\T_+ $ is  the image of the segment $[-\frac{1}{4},\frac{1}{4} ]$ with respect to the natural quotient map $\R\rightarrow \T$.  A topological group $(G,\tau)$ (as well as its topology $\tau$) is called {\it locally quasi-convex} if $\mathcal{N}(G,\tau)$ $G$ has a basis of neighborhoods of $0$ consisting of quasi-convex subsets of $(G,\tau)$. 

In connection to the inclusion (\ref{Eq1}), Gabriyelyan \cite{Ga1} added in item (a) of Theorem \ref{Tpolgen} one more property of the finer Polish topology of a characterized subgroup $H$ of a compact metrizable abelian group $X$; namely, it is also locally quasi-convex. Therefore, if $\sPol_{qc}(X)$ denotes the family of all subgroups of $X$ that admits a finer polish locally quasi-convex topology, he refined (\ref{Eq1}) with the following inclusion:

\begin{equation}\label{Eq1qc}
	\CH(X)\subseteq\sPol_{qc}(X).
\end{equation}

On the other hand, he produced in \cite{Ga1} a compact metrizable abelian group $X$ with a locally quasi-convex Polishable $F_\sigma$-subgroup that is not characterized. Hence the following non-inclusion that improves (\ref{Eq93}) holds:

\begin{equation}\label{Eq93qc}
\sPol_{qc}(X)\cap\SF(X)\not \subseteq\CH(X). 
\end{equation}

The group produced in \cite{Ga1} that witnesses (\ref{Eq93qc}) is $\T^\N$. It makes sense to pose the following question related to Question \ref{QG61}.

\begin{que}[\cite{Ga6}]\label{QG61qc}
 Does (\ref{Eq93qc}) holds for all compact metrizable abelian groups?
\end{que}

 In the light of Bir\'o's result it would be natural to ask whether (\ref{Eq93}) holds for $\T$, namely  whether
 $\sPol(\T)\cap\SF(\T)\subseteq\CH(\T)$. A negative answer to this question can be found in  \cite{Ga4}. 
 Indeed, the group $G_2$ defined there is Polishable and $F_\sigma$ (by  \cite[Proposition 1]{Ga4}).   On the other hand, $G_2$ is not locally quasi-convex (by \cite[Theorem 2]{Ga4}),   so $G_2$ is not characterized. So it makes sense to ask Question \ref{QG61qc} for $X=\T$.
\begin{que}[\cite{Ga6}]\label{QG61qcT}
 Does (\ref{Eq93qc}) holds for $\T$?
\end{que}

A natural class of groups having both properties (namely, being simultaneously Polish and locally quasi convex) is the class of second countable locally compact abelian groups. Therefore, it makes sense to consider also the subfamily $\sLC(X)$ of $\sPol_{qc}(X)$, consisting of those subgroups $H\in \sPol(X)$ whose Polish topology is {\em locally compact}. This is justified by the following result from \cite{N}, answering a question from \cite{Ga3}: 

\begin{teo}[\cite{N}]\label{negro} If $G$ is a second countable locally compact abelian group and $p: G \to X$ is a continuous injective homomorphism into a compact metrizable abelian group $X$, then  $p(X)$ is a characterized subgroup of $X$. 
\end{teo}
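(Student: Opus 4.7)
The plan is to exploit Pontryagin duality, transfer the problem to the countable discrete group $\hat X$ and the second countable LCA group $\hat G$, and construct a characterizing sequence by a diagonal procedure.

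First I would dualize. Since $X$ is compact metrizable, $\hat X$ is countable and discrete; since $G$ is second countable LCA, $\hat G$ is second countable LCA and $G$ is reflexive. The continuous homomorphism $p$ induces a continuous $\hat p\colon \hat X\to \hat G$, and a standard annihilator computation translates the injectivity of $p$ into the density of $\hat p(\hat X)$ in $\hat G$. Also, every $x\in X$ corresponds to a (possibly discontinuous) character $\tilde x\colon \hat X\to \T$, and the continuous characters of the dense subgroup $\hat p(\hat X)\subseteq \hat G$ are precisely those of the form $\chi\mapsto \chi(p(g))$ for $g\in G$, by reflexivity; so the content of the theorem is that the sequence $(v_n)$ built below detects exactly this continuity.

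Second, I would construct $(v_n)\subset \hat X$ with three properties: (i) $\hat p(v_n)\to 0$ in $\hat G$; (ii) every element of $\ker\hat p$ occurs infinitely often in $(v_n)$; (iii) for every null sequence $(\eta_m)$ in $\hat p(\hat X)$ (with respect to the topology of $\hat G$) there exist indices $n_m\to \infty$ with $\hat p(v_{n_m})=\eta_m$ for all sufficiently large $m$. Fixing a decreasing neighbourhood base $(W_k)$ of $0$ in $\hat G$ and $A_k=\hat p^{-1}(W_k)$, one puts into block $k$ the first $k$ elements of a fixed enumeration of $\ker\hat p$ together with the first $k$ elements of a diagonal enumeration of $A_k$; concatenating the blocks yields $(v_n)$, and (i)--(iii) follow by routine checks.

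The inclusion $p(G)\subseteq \sv(X)$ is then immediate: $v_n(p(g))=\hat p(v_n)(g)\to 0$, since convergence in the compact-open topology of $\hat G$ entails pointwise convergence. For the reverse inclusion, take $x\in \sv(X)$; by (ii), $\tilde x$ vanishes on $\ker\hat p$, so $\tilde x=\bar x\circ \hat p$ for a (discrete) character $\bar x$ of $\hat p(\hat X)$. The conjunction of (iii) with $\bar x(\hat p(v_n))=\tilde x(v_n)\to 0$ forces $\bar x$ to be continuous at $0$ in the subspace topology from $\hat G$: otherwise one would have $\epsilon>0$ and a null sequence $(\eta_m)\subset \hat p(\hat X)$ with $|\bar x(\eta_m)|\geq \epsilon$, and (iii) would produce a subsequence of $(v_n)$ along which $|\tilde x(v_n)|\geq \epsilon$, contradicting $v_n(x)\to 0$. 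A continuous character on the dense subgroup $\hat p(\hat X)$ extends uniquely and continuously to $\hat G$, and by reflexivity of $G$ this extension equals $\chi\mapsto \chi(g)$ for some $g\in G$; a final check on values against $\hat X$ gives $x=p(g)$.

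The principal obstacle is the diagonal construction of $(v_n)$ in the second step: one must simultaneously arrange nullity of $(\hat p(v_n))$ in $\hat G$ and enough exhaustivity that every potential discontinuity of an abstract character of $\hat p(\hat X)$ is witnessed along the sequence. Once (i)--(iii) are arranged, the remainder is a standard Pontryagin duality argument, and both the local compactness of $G$ (used to guarantee reflexivity and to identify the extended character with an actual element of $G$) and the second countability (used to build the diagonal sequence) are essential.
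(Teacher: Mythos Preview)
The paper does not supply its own proof of this theorem; it is quoted from \cite{N} and used as a black box (the only argument given nearby is the one-line deduction of Corollary~\ref{corollary:negro}). So there is no in-paper proof to compare against, but your proposal can be assessed on its own merits.

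Your overall architecture---dualize to $\hat p\colon \hat X\to \hat G$, observe that $\hat p$ has dense image, build a test sequence $(v_n)\subset\hat X$, and use reflexivity of $G$ to turn a continuous character of the dense subgroup $\hat p(\hat X)$ into an actual element of $G$---is sound and is the natural framework for this result.

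There is, however, a genuine gap: properties (i) and (iii) are \emph{mutually incompatible} whenever $G$ is non-compact. To see this, note that (iii) applied with $K=1$ forces, for some $M_1$, the inclusion $D\cap W_{M_1}\subseteq\{\hat p(v_n):n\in\N\}$, where $D=\hat p(\hat X)$; otherwise one could choose for each $M$ some $\eta_M\in (D\cap W_M)\setminus\hat p(\{v_n\})$, and the resulting null sequence $(\eta_M)$ would violate (iii). Now $D$ is dense in $\hat G$, so if $\hat G$ is not discrete (equivalently $G$ is not compact) the shell $D\cap(W_{M_1}\setminus W_{M_1+1})$ is infinite. Each of its elements must equal some $\hat p(v_n)$, yielding infinitely many indices $n$ with $\hat p(v_n)\notin W_{M_1+1}$---contradicting (i). Thus no sequence can satisfy (i)--(iii) simultaneously except in the trivial compact case, and in particular your block construction cannot produce one; the reverse inclusion $\sv(X)\subseteq p(G)$ is therefore not established.

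The repair is not a matter of tweaking the block sizes: one must replace (iii) by a weaker property that still detects discontinuity of the \emph{homomorphism} $\bar x$ without attempting to capture every abstract null sequence in $D$ pointwise. This is where the genuine content of Negro's argument lies, and it typically requires either the structure theorem for second countable LCA groups (reducing to the cases $\R^n$, compact, and discrete, each handled by different means) or a more delicate construction that exploits the group structure of $D$ rather than treating it as a mere countable dense set.
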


This theorem provides a useful and handy sufficient condition for a subgroup of a compact metrizable abelian group to be characterized: 

\begin{corollary}\label{corollary:negro}
$\sLC(X)\subseteq\CH(X)$ for every compact metrizable abelian group $X$. 
\end{corollary}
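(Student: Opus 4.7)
The plan is to derive the corollary as a direct instance of Theorem \ref{negro}, the only work being to unpack the definitions of $\sPol(X)$ and $\sLC(X)$ to produce the right locally compact Polish domain and the right continuous injective homomorphism.

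First I would fix $H\in\sLC(X)$. By definition, $H$ is a Polishable subgroup of $X$ whose witnessing Polish topology $\tau(H)$ on $H$ is locally compact. Invoking Definition \ref{DPol}, either directly via (a) or by viewing (b) for the Polish group $P:=(H,\tau(H))$ and the identity map $\mathrm{id}_H$, we obtain an abelian topological group $(H,\tau(H))$ such that the set–theoretic inclusion
\[
p:(H,\tau(H))\hookrightarrow X
\]
is a continuous injective homomorphism. Continuity holds because the Polish topology $\tau(H)$ is finer than the subspace topology $\tau\restriction_H$ inherited from $X$ (this is the content of the continuous isomorphism in Definition \ref{DPol}(b) applied to $P=(H,\tau(H))$), and injectivity is automatic as $p$ is the inclusion.

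Second I would check that $(H,\tau(H))$ satisfies the hypotheses of Theorem \ref{negro}: it is abelian (inherited from $X$), it is locally compact by the assumption $H\in\sLC(X)$, and it is second countable because a Polish space, being separable and metrizable, is second countable. Hence Theorem \ref{negro} applies to the continuous injective homomorphism $p:(H,\tau(H))\to X$ and yields $p(H)\in\CH(X)$. Since $p$ is the inclusion, $p(H)=H$, so $H\in\CH(X)$, proving $\sLC(X)\subseteq\CH(X)$.

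There is no serious obstacle here: the entire argument is a bookkeeping step once Theorem \ref{negro} is in hand. The only point worth being careful about is confirming that the Polish topology witnessing Polishability is finer than $\tau\restriction_H$, so that the inclusion is continuous with respect to $\tau(H)$; this is immediate from the formulation in Definition \ref{DPol}(b), since the continuous bijection $P\to H\subseteq X$ witnesses precisely that.
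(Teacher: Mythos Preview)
Your proof is correct and follows essentially the same route as the paper: take $G=(H,\tau(H))$ with its finer locally compact Polish topology, observe that the inclusion $p:G\hookrightarrow X$ is a continuous injective homomorphism, and apply Theorem~\ref{negro} to conclude $H=p(G)\in\CH(X)$. The paper's argument is terser, but the idea is identical; your additional verification that a Polish group is second countable is a harmless (and correct) elaboration.
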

 
  Indeed, if  $H \in \sLC(X)$ and $G$ denotes the group $H$ equipped with the finer locally compact group topology, then the inclusion map $p: G \hookrightarrow X$ is a continuous injective homomorphism, so $H = p(G)$ is a characterized subgroup of $X$ by  Theorem \ref{negro}.  
  
The next chain of inclusions summarizes (\ref{Eq1}), (\ref{Eq1qc}) and the above corollary
  \begin{equation*}
   \sLC(X)\subseteq\CH(X)\subseteq\sPol_{qc}(X).
  \end{equation*}
Recall that a topological space is \emph{$\sigma$-compact} if it is a countable union of compact subspace. Note that if $H\in\sLC(X)$, then $H$ is $\sigma$-compact (see \cite{K}) and hence obviously an $F_\sigma$-subgroup. Therefore, the following more precise inclusion holds
\begin{equation}\label{eqdag}
 \sLC(X)\subseteq\CH(X)\cap\F(X).
\end{equation}

\begin{exa}[\cite{HH}] Obviously,  $\sLC(X)$ contains all countable subgroups as well as all clopen subgroups of a compact metrizable group $X$. 
Sometimes, $\sLC(X)$ may be limited to exactly those subgroups of $X$, as in the case of the group $\J_p$ of $p$-adic integers, i.e., 
 $$\sLC(\J_p)=\{\text{countable subgroups}\}\cup\{\text{clopen subgroups}\}.$$
\end{exa}

It is easy to check, that every CMC subgroup of $X$ is in $   \sLC(X)$: 

\begin{prop}\label{new:prop} The CMC subgroups of a compact metrizable abelian group $X$ belong to $\sLC(X)$, so they are characterized 
$F_\sigma$-subgroups of $X$. 
\end{prop}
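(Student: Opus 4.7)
My plan is to produce explicitly a locally compact Polish group topology $\tau$ on the CMC subgroup $H$ that witnesses $H\in\sLC(X)$, and then invoke the already established inclusion $\sLC(X)\subseteq \CH(X)\cap\SF(X)$ from (\ref{eqdag}).

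Fix a compact $G_\delta$-subgroup $K$ of $X$ contained in $H$ with $H/K$ countable; then $K$ is closed in $X$ (being compact Hausdorff). The idea is to define $\tau$ as the unique group topology on $H$ for which the inclusion $K\hookrightarrow H$ is continuous, $K$ becomes open, and the quotient $H\to H/K$ is continuous when $H/K$ carries the discrete topology. Concretely, a set $U\subseteq H$ is $\tau$-open iff, for every coset $t+K$, the intersection $U\cap(t+K)$ is open in the subspace topology that $t+K$ inherits from $X$. Equivalently, $(H,\tau)$ is the topological coproduct $\bigsqcup_{t\in H/K}(t+K)$, where each summand carries the compact metrizable topology inherited from $X$.

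With this definition the routine verifications go through: the translations and the addition map are continuous coset-by-coset, so $(H,\tau)$ is a topological group; it is locally compact because $K$ is a compact open subgroup; and it is Polish because $H/K$ is countable and a countable disjoint union of Polish (compact metrizable) spaces is Polish. Moreover $\tau$ is finer than the subspace topology $\tau_X\restriction_H$ inherited from $X$: if $U$ is open in $X$, then $(H\cap U)\cap(t+K)=(t+K)\cap U$ is open in $t+K$ for every coset, hence $H\cap U$ is $\tau$-open. To see that the Borel structures coincide, it suffices to check that every $\tau$-open set is Borel in $\tau_X\restriction_H$; but each $\tau$-open set is a countable union $\bigcup_t ((t+K)\cap V_t)$ of relatively open pieces of the closed subsets $t+K$ of $X$, and these are Borel in $X$, hence Borel in $H$.

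This shows $H\in\sPol(X)$ with $\tau$ locally compact, i.e.\ $H\in\sLC(X)$. The inclusion (\ref{eqdag}) then gives $H\in\CH(X)\cap\SF(X)$, which is the desired conclusion. No step is really an obstacle; the only mildly delicate point is the Borel-equality condition in Definition \ref{DPol}(a), which is handled precisely by the countability of $H/K$ together with the fact that $K$ (and hence every coset $t+K$) is closed in $X$.
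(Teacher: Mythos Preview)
Your proof is correct and is precisely the natural argument the paper has in mind when it says ``It is easy to check'' just before the proposition; the paper gives no further details, so there is nothing substantive to compare. One small redundancy: once you have shown that $\tau$ is a Polish group topology and that the identity $(H,\tau)\to (H,\tau_X\!\restriction_H)$ is continuous, you already have condition (b) of Definition~\ref{DPol}, so the separate verification of the Borel-equality condition (a) is not needed.
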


Note that this proposition provides a new proof of Corollary \ref{CDKB}. 

The next theorem describes when the implication of Proposition \ref{new:prop} can be inverted. 

\begin{teo}\label{new:prop1}\cite{HH} For a compact metrizable abelian group $X$ the following are equivalent:  
 \begin{itemize}
  \item[(a)] there exist no continuous injective homomorphisms $\R \to X$; 
  \item[(b)] all subgroups in $\sLC(X)$ are CMC. 
 \end{itemize}
\end{teo}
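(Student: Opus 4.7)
The plan is to prove the two implications separately, using the Pontryagin--van Kampen structure theorem for compactly generated locally compact abelian groups together with the open mapping theorem for Polish groups.

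For $(a)\Rightarrow(b)$: Let $H\in \sLC(X)$ with finer Polish locally compact group topology $\tau$. Take a compact symmetric neighborhood $U$ of $0$ in $(H,\tau)$; the subgroup $H_0 = \langle U\rangle$ is open and compactly generated in $(H,\tau)$, so by the structure theorem $H_0$ is topologically isomorphic to $\R^a\times \Z^b\times C$ for some $a,b\in \N$ and some compact metrizable group $C$. Since $H_0$ is $\tau$-open in the second countable group $(H,\tau)$, the quotient $H/H_0$ is countable. The subgroup $C$ is $\tau$-compact, hence compact in the coarser $X$-subspace topology, and being closed in the metrizable space $X$ it is automatically $G_\delta$ there. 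The crucial step is to show that hypothesis (a) forces $a=0$: if $a\ge 1$, the composition $\R\hookrightarrow \R^a\hookrightarrow H_0\hookrightarrow H\hookrightarrow X$ (each arrow continuous, the last because $\tau$ refines the subspace topology) is continuous and injective, contradicting (a). With $a=0$, one has $H_0/C\cong \Z^b$ countable, hence $H/C$ is countable, and $C$ witnesses that $H$ is CMC.

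For $(b)\Rightarrow(a)$: I argue by contraposition. Suppose $f\colon \R\to X$ is a continuous injective homomorphism and set $H=f(\R)$. Transporting the topology of $\R$ along $f$ gives a locally compact Polish group structure on $H$ finer than the $X$-subspace topology, so $H\in \sLC(X)$; I show $H$ cannot be CMC. Assume, for a contradiction, that $K\subseteq H$ is compact in $X$ with $H/K$ countable, and set $S=f^{-1}(K)\subseteq \R$, so that $\R/S\cong H/K$ is countable. Since the only closed proper subgroups of $\R$ are $\{0\}$ and $\alpha\Z$ (both with uncountable quotient), either $S=\R$ or $S$ is a dense proper subgroup of $\R$. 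If $S=\R$, then $K=H=f(\R)$ and $f\colon \R\to K$ is a continuous bijective homomorphism between Polish groups; by the open mapping theorem it would be a homeomorphism, contradicting that $K$ is compact while $\R$ is not. If $S$ is dense in $\R$, then continuity of $f$ combined with closedness of the compact set $K$ in $X$ yields $f(\R)=f(\overline{S})\subseteq \overline{f(S)}=K$ (closures taken in $\R$ and $X$ respectively), which together with $K\subseteq f(\R)$ and injectivity of $f$ forces $S=\R$, again a contradiction.

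The main obstacle lies in $(b)\Rightarrow(a)$: ruling out that $H=f(\R)$ is CMC cannot be done by a naive cardinality argument, since $\R$ possesses dense proper subgroups of countable index (e.g.\ any $\Q$-hyperplane, with quotient $\Q$). The proof therefore splits into the density/continuity argument for proper subgroups and a separate application of the open mapping theorem for Polish groups in the case $S=\R$; the remainder rests on the structure theorem for compactly generated LCA groups and the automatic $G_\delta$ character of closed subsets in a metrizable compactum.
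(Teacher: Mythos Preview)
The paper does not actually prove this theorem; it is stated with a bare citation to \cite{HH} and followed only by the remark that condition~(a) is equivalent to the arc-component of $X$ being trivial or isomorphic to $\T$. There is therefore no proof in the paper to compare against.

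Your argument is correct and follows the natural line. For $(a)\Rightarrow(b)$ the structure theorem for compactly generated LCA groups is exactly the right tool: the open compactly generated subgroup $H_0\cong \R^a\times\Z^b\times C$ isolates a $\tau$-compact (hence $X$-compact, hence closed, hence $G_\delta$) subgroup $C$, and hypothesis~(a) kills the $\R^a$ factor, leaving $H/C$ countable. For $(b)\Rightarrow(a)$ you correctly identify the delicate point---that $\R$ does admit proper subgroups of countable index---and handle it by the dichotomy $S=\R$ versus $S$ dense, the first case disposed of via the open mapping theorem for Polish groups and the second via continuity of $f$ and closedness of $K$. Both cases are airtight.

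One cosmetic remark: in the $(a)\Rightarrow(b)$ direction you could bypass the structure theorem and argue directly that the connected component $(H,\tau)^\circ$ is an open connected LCA group, hence of the form $\R^a\times C$ with $C$ compact; this is slightly lighter but amounts to the same thing.
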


Obviously, $X$ satisfies (a) whenever $X$ is totally disconnected. More precisely, $X$ satisfies (a) if and only if its arc-component is either trivial
or isomorphic to $\T$ (for more detail see \cite{HH}).  

\subsection{When characterized subgroups of the compact metrizable groups are $F_\sigma$?
}\label{FS}

According to Theorem \ref{ThGa}, the answer to the above question is \lq\lq always\rq\rq, when $X$ is of finite exponent. For groups $X$ of infinite exponent, 
the same theorem provides always some characterized subgroup of $X$ that is not $F_\sigma$. Our aim is to precisely describe when a 
characterized subgroup of $X$ is $F_\sigma$. To this end one can introduce the 
following topology, obtained by a modification of $\tau_{\varrho_\vs}$.

\begin{defin}\label{DefTestTopo} {\rm \cite{DI3}} Let $\vs$ be a sequence of characters on a compact metrizable abelian group $(X,\tau)$ and $\tau_{\varrho_\vs}^*$ be the topology on $X$ having as a filter of neighborhoods of $0$ in $X$ the family 
$$
\left\{W_n=\overline{B^{\varrho_\vs}_{1/n}(0)}:n\in\N\right\},
$$ 
where $B^{\varrho_\vs}_{1/n}(0)$ is the ball around 0 of radius ${1/n}$ with respect to the metric $\varrho_\vs$  and $\overline{B^{\varrho_\vs}_{1/n}(0)}$ denotes its closure in $(X,\tau)$.
\end{defin}

We can refer to $\tau_{\varrho_\vs}^*$ as the \emph{$\F$-test topology} with respect to the sequence $\vs$. Note that this topology is  metrizable since it has a countable local base. Moreover if $\tau$ is the original compact topology on $X$, the following inclusions hold.

\begin{equation*}
 \tau\subseteq\tau_{\varrho_\vs}^*\subseteq\tau_{\varrho_\vs}.
\end{equation*}

Clearly, $\tau_{\varrho_\vs}$ is discrete if and only if $\tau_{\varrho_\vs}^*$ is discrete. 

The following theorem, providing a simple criterion for the solution of Problem \ref{probBor}, justifies the term test-topology for $\tau_{\varrho_\vs}^*$.

\begin{teo}[{\cite[Theorem A]{DI3}}]\label{ThC}
  Let $X$ be a metrizable compact abelian group and $\vs\in \widehat{X}^\N$. Then 
  \begin{equation*}
   \sv(X)\in\SF(X)\Longleftrightarrow\sv(X)\in\tau_{\varrho_\vs}^*.
  \end{equation*}
\end{teo}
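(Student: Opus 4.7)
The plan is to reduce the theorem to showing $\sv(X)\in\SF(X)\iff W_m\subseteq\sv(X)$ for some $m\in\N$. This reduction is immediate once one notes that $\tau^*_{\varrho_\vs}$ is a (translation-invariant) group topology: the $W_n$ are symmetric (as $\varrho_\vs$ is), and $W_{2n}+W_{2n}\subseteq W_n$ follows from the triangle inequality for the invariant metric $\varrho_\vs$ combined with $\tau$-continuity of addition. Since $\sv(X)$ is a subgroup, openness of $\sv(X)$ in $\tau^*_{\varrho_\vs}$ is equivalent to its containing some basic neighborhood $W_m$ of $0$.

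For the easier direction $(\Leftarrow)$, suppose $W_m\subseteq H:=\sv(X)$. The key is that $W_m$ is $\tau$-compact, so the subgroup $L:=\langle W_m\rangle\subseteq H$ is $\sigma$-compact in $\tau$ (being $\bigcup_k(W_m+\cdots+W_m)$) and hence $L\in\SF(X)$. Since $L\supseteq B^{\varrho_\vs}_{1/m}(0)\cap H$, which is a $\tau(H)$-basic open neighborhood of $0$, $L$ is a $\tau(H)$-clopen subgroup of the Polish group $(H,\tau(H))$ from Theorem~\ref{Tpolgen}(a); hence the discrete quotient $H/L$ is Polish and therefore countable, making $H$ a countable union of $F_\sigma$-cosets of $L$, and so $H\in\SF(X)$.

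For the harder direction $(\Rightarrow)$, write $H=\bigcup_k F_k$ with $F_k$ $\tau$-compact, symmetric, $0$-containing, increasing, and with $F_k+F_k\subseteq F_{k+1}$. Since $\tau\subseteq\tau(H)$, each $F_k$ is $\tau(H)$-closed in $H$, and Baire category in the Polish group $(H,\tau(H))$ yields some $F_k$ with non-empty $\tau(H)$-interior; after choosing an interior point $x_0\in F_k$, there exists $m$ with $B^{\varrho_\vs}_{1/m}(x_0)\cap H\subseteq F_k$. The group-theoretic identity $y=(x_0+y)-x_0$ then gives $B^{\varrho_\vs}_{1/(2m)}(0)\cap H\subseteq F_k-F_k\subseteq F_{k+1}$, whence
\[
\overline{B^{\varrho_\vs}_{1/(2m)}(0)\cap H}^{\tau}\subseteq F_{k+1}\subseteq H.
\]

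The remaining step, which I expect to be the main obstacle, is to upgrade this to $W_M\subseteq H$ for some $M$. The natural route is a density lemma: for $M$ large enough, $B^{\varrho_\vs}_{1/M}(0)\cap H$ is $\tau$-dense in $B^{\varrho_\vs}_{1/M}(0)$, which would give
\[
W_M=\overline{B^{\varrho_\vs}_{1/M}(0)}^\tau=\overline{B^{\varrho_\vs}_{1/M}(0)\cap H}^\tau\subseteq F_{k+1}\subseteq H.
\]
The delicate point is ruling out the existence of a ``bad'' $y\in B^{\varrho_\vs}_{1/M}(0)\setminus H$ that cannot be $\tau$-approximated by elements of $B^{\varrho_\vs}_{1/M}(0)\cap H$. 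A promising approach is a contradiction argument, using such bad points to construct a sequence $y_M\to 0$ in $\tau$ that escapes every $F_k$, thereby contradicting $H=\bigcup_k F_k$; this should exploit the $\tau_{\varrho_\vs}$-closedness of $H$ in the complete metric space $(X,\tau_{\varrho_\vs})$ together with Baire category in an appropriate auxiliary space.
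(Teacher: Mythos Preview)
The paper does not supply a proof of this result; it is quoted from the preprint~\cite{DI3}, so there is no in-paper argument to compare against. Your $(\Leftarrow)$ direction is clean and correct, and the Baire-category reduction you give for $(\Rightarrow)$ is the standard one, correctly producing a $\tau$-compact $F_{k+1}\subseteq H$ containing $B^{\varrho_\vs}_{1/(2m)}(0)\cap H$.

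The gap is exactly where you place it, but the density lemma you propose is false in the stated generality, and in fact the theorem as literally written already breaks there. Take $X=\T$ with the constant sequence $v_n\equiv m$, $m>1$: then $H=\sv(\T)=\T[m]$ is finite (hence $F_\sigma$), yet $\varrho_\vs$ generates the same topology as $\tau$, so each $B^{\varrho_\vs}_{1/M}(0)$ is a genuine open arc about $0$, $B^{\varrho_\vs}_{1/M}(0)\cap H=\{0\}$ is not $\tau$-dense in it, and $W_M\not\subseteq H$ for every $M$. Presumably~\cite{DI3} carries a tacit nondegeneracy hypothesis (e.g.\ that $\{v_n:n\in\N\}$ is infinite, equivalently $\tau_{\varrho_\vs}\neq\tau$), but even under such a hypothesis your density claim is not evident, and the closing ``construct bad $y_M$ escaping every $F_k$'' idea is not yet an argument: the points $y_M$ you would build lie \emph{outside} $H$, so their failure to sit in any $F_k\subseteq H$ is automatic and contradicts nothing. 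Bridging from $B^{\varrho_\vs}_{1/(2m)}(0)\cap H\subseteq F_{k+1}$ to $W_M\subseteq H$ requires a genuinely new ingredient beyond what you have sketched.
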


An important consequence of the above theorem is the independence of the topology $\tau_{\varrho_\vs}$ on the choice of the characterizing sequence whenever $\sv(X)$ is an $F_\sigma$-subgroup. In such a case $(X,\tau_{\varrho_\vs})$ is Polish precisely when $\sv(X)$ is an open subgroup of $X$: 

\begin{teo}[{\cite[Corollary A1]{DI3}}]\label{CorC1}
 Let $X$ be a metrizable compact abelian group and $\vs$ be a sequence of characters such that $\sv(X)\in\F(X)$. 
 \begin{itemize}
  \item[(a)] If $\us$ is a sequence of characters such that $\vs\sim\us$, then $\vs\approx\us$.
  \item[(b)] The following are equivalent:  
  \begin{itemize}
      \item[(b$_1$)] $(X,\tau_{\varrho_\vs})$ is Polish; 
      \item[(b$_2$)] $(X,\tau_{\varrho_\vs})$ is separable; 
      \item[(b$_3$)] $\sv(X)$ is a closed finite-index (so, open) subgroup of $X$.
  \end{itemize}
\end{itemize}
\end{teo}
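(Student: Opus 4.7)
My plan hinges on the key observation that the hypothesis $H := \sv(X) \in \F(X)$, via Theorem \ref{ThC}, places $H$ in $\tau_{\varrho_\vs}^* \subseteq \tau_{\varrho_\vs}$; that is, \emph{$H$ is an open (hence clopen) subgroup of $(X, \tau_{\varrho_\vs})$}. Since $\varrho_\vs$ is translation-invariant, $\tau_{\varrho_\vs}$ is a topological group topology fully determined by its restriction to any open subgroup together with the partition of $X$ into cosets.

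For part (a), suppose $\us \sim \vs$, so $H = \su(X) = \sv(X)$ is $F_\sigma$. Applying Theorem \ref{ThC} to both $\vs$ and $\us$, $H$ is an open subgroup of $(X, \tau_{\varrho_\vs})$ and of $(X, \tau_{\varrho_\us})$. The restrictions $\tau_{\varrho_\vs}\restriction_H$ and $\tau_{\varrho_\us}\restriction_H$ both coincide with the unique Polish topology $\tau(H)$ witnessing polishability of $H$ (recalled after Definition \ref{DPol}). A translation-invariant group topology with $H$ as open subgroup is determined by its restriction to $H$, so $\tau_{\varrho_\vs} = \tau_{\varrho_\us}$, i.e., $\vs \approx \us$.

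For part (b), (b$_1$)$\Rightarrow$(b$_2$) is the definition of Polish. For (b$_3$)$\Rightarrow$(b$_1$): if $H$ is $\tau$-closed of finite index, then $H$ is $\tau$-clopen, and by Remark \ref{11Ott} $\tau(H) = \tau\restriction_H$. Both $\tau$ and $\tau_{\varrho_\vs}$ are translation-invariant group topologies having $H$ as open subgroup, and they agree on $H$, so by the same principle used in part (a), they agree on all of $X$. Hence $(X, \tau_{\varrho_\vs}) = (X,\tau)$ is compact metrizable and therefore Polish.

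The main obstacle is (b$_2$)$\Rightarrow$(b$_3$), which I would settle by a Baire category argument inside the compact (Baire) space $(X, \tau)$. Suppose $(X, \tau_{\varrho_\vs})$ is separable; since $H$ is $\tau_{\varrho_\vs}$-open, its cosets form a pairwise disjoint family of nonempty $\tau_{\varrho_\vs}$-open sets, so there are at most countably many cosets $x_i + H$. Writing $H = \bigcup_n F_n$ with each $F_n$ $\tau$-closed, we obtain $X = \bigcup_{i,n}(x_i + F_n)$, a countable union of $\tau$-closed subsets of the compact Baire space $(X,\tau)$. By Baire's theorem some $x_i + F_n$ has nonempty $\tau$-interior, whence so does $F_n \subseteq H$. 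A subgroup of a topological group containing a nonempty open set is open, so $H$ is $\tau$-open, and compactness of $X$ then forces $[X:H] < \infty$ and $H$ to be $\tau$-closed, giving (b$_3$).
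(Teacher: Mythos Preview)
Your proposal is correct and follows the approach the paper indicates: the text presents Theorem~\ref{CorC1} as ``an important consequence'' of Theorem~\ref{ThC} without spelling out a proof, and your argument does exactly that, using Theorem~\ref{ThC} to make $H$ open in $(X,\tau_{\varrho_\vs})$, the uniqueness of the Polish topology $\tau(H)$ on $H$, and the elementary fact that a group topology is determined by its restriction to an open subgroup. The Baire-category step for (b$_2$)$\Rightarrow$(b$_3$) is standard and correct.
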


It is not known if item (a) of the above theorem remains valid without the assumption that $H$ is an $F_\sigma$-subgroup (see Question \ref{Qtopocoinc}). 

  \begin{cor}\label{new:cor} Let $X$ be a compact metrizable abelian group. If $H \in \sLC(X)$, then for every pair of characterizing sequences $\us$ and $\vs$ for $H$ the topologies $\tau_{\varrho_\us}$ and $\tau_{\varrho_\vs}$ coincide. 
\end{cor}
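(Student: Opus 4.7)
The plan is to derive the corollary as a direct combination of the inclusion (\ref{eqdag}) with Theorem \ref{CorC1}(a); there is essentially no computation to do beyond putting those two facts together.

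First, I would observe that since $H\in\sLC(X)$, the inclusion (\ref{eqdag}), namely $\sLC(X)\subseteq\CH(X)\cap\F(X)$, gives two pieces of information at once: $H$ is a characterized subgroup of $X$ (so it admits characterizing sequences at all, making the statement nontrivial), and, crucially, $H$ is an $F_\sigma$-subgroup of $X$. This $F_\sigma$ property is exactly what is needed to trigger Theorem \ref{CorC1}.

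Next, suppose $\us$ and $\vs$ are both characterizing sequences for $H$. By the definition of ``characterizing,'' this means $\su(X)=H=\sv(X)$, i.e., $\us\sim\vs$ in the terminology of the paper. Since we have already established $\sv(X)=H\in\F(X)$, Theorem \ref{CorC1}(a) applies and yields $\us\approx\vs$, which by definition is the equality $\tau_{\varrho_\us}=\tau_{\varrho_\vs}$. This is exactly the conclusion of the corollary.

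There is no genuine obstacle here; the whole point of the corollary is to highlight the previous two results working together: membership in $\sLC(X)$ is a clean sufficient condition for $H$ to be an $F_\sigma$ characterized subgroup, and for such subgroups the associated topology on $X$ is already known to be independent of the choice of characterizing sequence. If one wanted to emphasize a conceptual point rather than merely cite, one could remark that the argument shows more: the corollary holds for \emph{any} $H\in\CH(X)\cap\F(X)$, and the role of the hypothesis $H\in\sLC(X)$ is only to provide the $F_\sigma$ property via (\ref{eqdag}), with the additional local compactness of $\tau(H)$ playing no further part in the proof.
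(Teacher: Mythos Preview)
Your proof is correct and matches the paper's own argument exactly: the paper simply says the corollary follows from Theorem~\ref{CorC1} and (\ref{eqdag}), which is precisely the combination you spell out.
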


\begin{proof}
Follows from Theorem \ref{CorC1} and (\ref{eqdag}). 
\end{proof}

From Theorem \ref{ThC} we deduce now that $\tau_{\varrho_\vs}$ is discrete on the whole $X$ whenever $\sv(X)$ is countable. 

\begin{teo}[{\cite[Theorem B]{DI3}}]\label{CorC2}
	 Let $X$ be a metrizable compact abelian group and $\vs\in \widehat{X}^\N$. Then the following are equivalent: 
\begin{itemize}
  \item[(a)]  $\sv(X)$ is countable;
  \item[(b)] $|\sv(X)|<\cc$;
  \item[(c)]  $\tau_{\varrho_\vs}$ is discrete;
  \item[(d)] $\tau_{\varrho_\vs}^*$ is discrete;
  \item[(e)]  $\tau_{\varrho_\vs}^*\restriction_{\sv(X)}$ is discrete;
  \item[(f)] $\tau({\sv(X)})$ is discrete.
\end{itemize}
\end{teo}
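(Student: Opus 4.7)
The plan is to prove (a) $\Leftrightarrow$ (b) directly from the Alexandroff--Hausdorff dichotomy for Borel sets (as recalled after Remark \ref{rem1}, since $\sv(X)$ is Borel), and then close the cycle (a) $\Rightarrow$ (c) $\Leftrightarrow$ (d) $\Rightarrow$ (e) $\Rightarrow$ (f) $\Rightarrow$ (a). The equivalence (c) $\Leftrightarrow$ (d) is the observation already made in the text just before Theorem \ref{ThC} (using that $W_n=\overline{B^{\varrho_\vs}_{1/n}(0)}$ reduces to $\{0\}$ in the Hausdorff space $(X,\tau)$ exactly when $B^{\varrho_\vs}_{1/n}(0)=\{0\}$). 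The implication (d) $\Rightarrow$ (e) is the trivial fact that a subspace of a discrete space is discrete. For (e) $\Rightarrow$ (f), the inclusion $\tau_{\varrho_\vs}^*\subseteq\tau_{\varrho_\vs}$ restricts to $\tau_{\varrho_\vs}^*\restriction_{\sv(X)}\subseteq\tau(\sv(X))$, so discreteness of the coarser topology transfers to the finer one. For (f) $\Rightarrow$ (a), a Polish discrete space is countable: a countable basis of a second countable space must cover each point, so if singletons are open there are at most countably many of them.

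The essential step is (a) $\Rightarrow$ (c). Assume $\sv(X)$ is countable. Then $\sv(X)$ is $F_\sigma$ as a countable union of singletons, so Theorem \ref{ThC} yields $\sv(X)\in\tau_{\varrho_\vs}^*$, producing some $n_1$ with $W_{n_1}\subseteq\sv(X)$; since $B^{\varrho_\vs}_{1/n_1}(0)\subseteq W_{n_1}$ by the very definition of $W_{n_1}$, we obtain $B^{\varrho_\vs}_{1/n_1}(0)\subseteq\sv(X)$. On the other hand, $(\sv(X),\tau(\sv(X)))$ is a countable Polish group by Theorem \ref{Tpolgen}(a), so the Baire category theorem forces some singleton to be open, and by homogeneity the topology $\tau(\sv(X))$ is discrete; fix $n_0$ with $B^{\varrho_\vs}_{1/n_0}(0)\cap\sv(X)=\{0\}$. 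Setting $n=\max(n_0,n_1)$ yields
\begin{equation*}
B^{\varrho_\vs}_{1/n}(0)\subseteq B^{\varrho_\vs}_{1/n_0}(0)\cap B^{\varrho_\vs}_{1/n_1}(0)\subseteq B^{\varrho_\vs}_{1/n_0}(0)\cap\sv(X)=\{0\},
\end{equation*}
so $B^{\varrho_\vs}_{1/n}(0)=\{0\}$, meaning $\tau_{\varrho_\vs}$ is discrete at $0$ and, by translation-invariance of $\varrho_\vs$, everywhere.

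The main obstacle is that the containment $B^{\varrho_\vs}_{1/n_1}(0)\subseteq\sv(X)$ does \emph{not} follow directly from countability of $\sv(X)$: a point $x\notin\sv(X)$ can satisfy $\sup_m\|v_m(x)\|<1/n_1$ because bounded $\T$-norms do not force $v_m(x)\to 0$ (e.g., in $X=\T$ with $v_m(x)=mx$ and $x$ irrational). The decisive input is Theorem \ref{ThC}, which promotes the set-theoretic $F_\sigma$-property of $\sv(X)$ to the topological fact that $\sv(X)$ absorbs an entire $\varrho_\vs$-ball; once that is available, the two ball-shrinkings combine at once to force discreteness on all of $X$.
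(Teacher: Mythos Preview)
Your proof is correct and follows essentially the same route as the paper's: the equivalence (a)$\Leftrightarrow$(b) via the Borel dichotomy, the chain (c)$\Rightarrow$(d)$\Rightarrow$(e)$\Rightarrow$(f)$\Rightarrow$(a) via the inclusions $\tau^*_{\varrho_\vs}\subseteq\tau_{\varrho_\vs}$ and separability/second countability of Polish spaces, and the key step (a)$\Rightarrow$(c) by combining Theorem~\ref{ThC} (to make $\sv(X)$ open in $\tau^*_{\varrho_\vs}$, hence in $\tau_{\varrho_\vs}$) with discreteness of the Polish topology on the countable group $\sv(X)$. The only cosmetic differences are that the paper invokes \emph{uniqueness} of the Polish topology to see that $\tau(\sv(X))$ is discrete (since the discrete topology on a countable group is Polish), whereas you argue via Baire category, and that you unpack the ``open subgroup with discrete subspace topology $\Rightarrow$ ambient discrete'' step into explicit ball radii $n_0,n_1$; the logical content is identical.
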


\begin{proof}
(a)$\Leftrightarrow$(b). It is a consequence of Remark \ref{rem1}

(a)$\Rightarrow(c)$. Indeed if $\sv(X)$ is countable, then it is obviously in $\F(X)$. Hence, $\sv(X)$ is $\tau_{\varrho_\vs}$-open, by Theorem \ref{ThC}. Moreover $\tau_{\varrho_\vs}$ is discrete when restricted to $\sv(X)$ since the topology that witnesses the polishability of $\sv(X)$ is unique. Thus, $\sv(X)$ is both $\tau_{\varrho_\vs}$-open and $\tau_{\varrho_\vs}$-discrete. Consequently,  $\tau_{\varrho_\vs}$ is discrete.

(c)$\Rightarrow$(d) follows from the definition of $\tau_{\varrho_\vs}^*$, while (d)$\Rightarrow$(e) is obvious.

(e)$\Rightarrow$(f) follows from the fact that $\tau({\sv(X)})=\tau_{\varrho_\vs}\restriction_{\sv(X)}$ is finer then $\tau_{\varrho_\vs}^*\restriction_{\sv(X)}$.

(f)$\Rightarrow$(a) follows from the fact that $(\sv(X),\tau({\sv(X)}))$ is separable.
 \end{proof}

This theorem shows, that $\uuu\approx \vvv$ does not imply $\uuu \sim  \vvv$. Indeed, if $H$ and $H'$ two distinct countable subgroups, then 
they are characterized by Theorem \ref{ThmDK}, say $H= \tau_{\varrho_\us}$ and $H'=\tau_{\varrho_\vs}$. 
According to Theorem \ref{CorC2}, $\tau_{\varrho_\us}=\tau_{\varrho_\vs}$ are discrete. So $\uuu\approx \vvv$, while $H' \ne H$, i.e., $\uuu \not\sim  \vvv$.

\section{Characterized subgroups of the Circle Group}\label{Toro}
\subsection{Preliminaries}
\begin{defin}
For   a topological abelian group $X$  and a sequence $\us=(u_n)_{n\in\N}$  of integers, let 
$$
\tu(X)=\left\{x\in X:u_nx\to0 \text{ in } X \right\}.
$$
\end{defin}

This subset is actually a subgroup of $X$ and has been studied in \cite{Arm,D,BDMW2}. Here we consider only the case when $X = \T$.  
Since $\widehat{\T}=\Z$, the notion defined above coincides (in the case of $X=\T$) with the notion of characterized subgroup of $\T$. In fact, 
$\tTu=\sv(\T)$ where for all $n\in\N$ the character $v_n$ of $\T$  is defined by $v_n:x\mapsto u_nx$. 
In other words, $H\le\T$ is characterized if and only if there exists $\us\in\Z^{\N}$ such that $H=\tTu$.

The above described circumstance provides additional tools to use in the study of characterized subgroups of $\T$, 
namely the specific properties of $\Z$. 

As a first easy reduction, one can note that if $\us$ does not have any constant subsequences one can find  a strictly increasing sequence $\us^*$ of non-negative integers such that $\us^*\sim \uuu$ (see \cite[Proposition 2.5]{BDMW1}). 
 
 Another  powerful tool to use in the study of characterized subgroups of $\T$ is the sequence of ratios $(q_n):=(\frac{u_n}{u_{n-1}})$ for a sequence $\us$ of non-zero integers. Furthermore, 
let 
$$
q^\us: = \limsup_n\frac{u_{n+1}}{u_n}\mbox{  and  }q_\us: = \liminf_n\frac{u_{n+1}}{u_n}. 
$$
Note that the sequence of ratios $(q_n)$ is bounded precisely when $q^\us$ is finite (one can also say that $\us$ is \emph{$q$-bounded}), while 
$(q_n)$ converges to infinity precisely when $q_\us = \infty$ (i.e. $\us$ is \emph{$q$-divergent}). 
In these terms, Eggleston \cite{Egg} proved the following remarkable theorem  (see also \cite{BDMW1}).

\begin{teo}[Eggleston]\label{TEgg} Let $\us=(u_n)$ a sequence of integers.
  \begin{itemize}
      \item[(a)] If $q^\us < \infty$ (i.e., the sequence of ratios $(q_n)$ is bounded), then $\tTu$ is countable.
      \item[(b)] If $q_\us = \infty$ (i.e., the sequence of ratios $(q_n)$ converges to infinity), then $\tTu$ is uncountable.
  \end{itemize}
\end{teo}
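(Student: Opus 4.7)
The plan is to handle both directions via a common ``nearest-integer approximant'' device. After the standard reduction to $\us$ a strictly increasing sequence of positive integers (the case of a constant subsequence being trivial), for $x\in\tTu$ write $u_n x = p_n + r_n$ where $p_n\in\Z$ is the integer closest to $u_n x$ and $|r_n|=\|u_n x\|\to 0$. Then $p_n/u_n\to x$ and we have the basic linking identity
\[
 u_n p_{n+1} - u_{n+1} p_n \;=\; u_{n+1} r_n - u_n r_{n+1},
\]
which controls how the successive approximants are tied together.

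For part (a), fix $K$ with $u_{n+1}/u_n\le K$. For each pair $m,N\ge 1$ define
\[
 A_{m,N}\;=\;\{x\in\T : \|u_n x\|\le 1/m \text{ for every } n\ge N\},
\]
so that $\tTu\subseteq\bigcup_N A_{m,N}$ for every single $m$. I would select once and for all some $m>2(K+1)$. The key claim is that on $A_{m,N}$ the entire sequence $(p_n)_{n\ge N}$ is uniquely determined by $p_N$: dividing the identity above by $u_n$ yields
\[
 \bigl|p_{n+1} - (u_{n+1}/u_n)\,p_n\bigr|\;\le\;(K+1)/m\;<\;1/2,
\]
so $p_{n+1}$ is forced to be the unique nearest integer to $p_n u_{n+1}/u_n$. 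Since $p_N\in\{0,1,\ldots,u_N\}$, this gives $|A_{m,N}|\le u_N+1$, whence $\tTu$ is countable as a countable union of finite sets.

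For part (b), assume $u_{n+1}/u_n\to\infty$ and build a Cantor subset of $\tTu$ as the body of a binary tree of nested closed intervals $\{I_\sigma\}_{\sigma\in 2^{<\N}}$. For $\sigma\in 2^n$ each $I_\sigma$ will have the form $[k_\sigma/u_n - \alpha_n/u_n,\,k_\sigma/u_n + \alpha_n/u_n]$ for some integer $k_\sigma$ and a pre-chosen sequence $\alpha_n\to 0$, so that any $x\in I_\sigma$ satisfies $\|u_n x\|\le\alpha_n$. The inductive step requires finding, inside each $I_\sigma$, two distinct multiples of $1/u_{n+1}$ whose $\alpha_{n+1}/u_{n+1}$-neighborhoods fit disjointly in $I_\sigma$; the number of admissible multiples is of order $\alpha_n q_{n+1}$, and $q_{n+1}\to\infty$ makes this $\ge 2$ eventually. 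A concrete choice such as $\alpha_n=2/q_{n+1}$ simultaneously forces $\alpha_n\to 0$ and keeps $\alpha_n q_{n+1}\ge 2$. The branch map $\tau\mapsto x_\tau:=\bigcap_n I_{\tau\upharpoonright n}$ is then well defined by compactness, injective by the disjointness of siblings at every branching level, and takes values in $\tTu$ since $\|u_n x_\tau\|\le\alpha_n\to 0$, yielding the desired Cantor set.

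The main obstacle lies in part (b), in the calibration of the widths $\alpha_n$: they must decay to $0$ so that $\|u_n x\|\to 0$ on the Cantor set, yet $\alpha_n q_{n+1}$ must stay above $2$ to permit bifurcation at every node, with the further constraint that the chosen $\alpha_{n+1}/u_{n+1}$-neighborhoods genuinely nest inside the parent interval — a short but careful inequality that pins down the quantitative choice. Part (a) is comparatively mechanical once the linking identity and the stratification $\bigcup_N A_{m,N}$ are in place; the only subtlety is that the rate of convergence $\|u_n x\|\to 0$ is not uniform in $x$, but this non-uniformity is exactly what the union over $N$ absorbs.
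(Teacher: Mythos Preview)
The paper does not supply its own proof of this theorem: it is quoted as Eggleston's result, with the references \cite{Egg} and \cite{BDMW1}, and no argument is given in the text. Hence there is nothing in the paper to compare your proposal against.

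That said, your argument is essentially the standard one and is correct in outline. For part~(a) the linking identity and the recursion $p_{n+1}=\text{nearest integer to }(u_{n+1}/u_n)p_n$ do pin down $(p_n)_{n\ge N}$ from $p_N$, and $p_n/u_n\to x$ then shows each $A_{m,N}$ is finite; one small caveat is that the reduction ``to a strictly increasing sequence'' is not literally what you need (and it is not immediate that the equivalent strictly increasing sequence $\us^*$ from the paper retains bounded ratios). What your proof actually uses is only $u_n\to\infty$, and this is free: if $(|u_n|)$ does not tend to $\infty$, pigeonhole gives a nonzero value $c$ occurring infinitely often, so $\tTu\subseteq\T[c]$ is finite. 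For part~(b), the Cantor-tree construction with $I_\sigma=[k_\sigma/u_n-\alpha_n/u_n,\,k_\sigma/u_n+\alpha_n/u_n]$ works; with your choice $\alpha_n=2/q_{n+1}$ the parent interval contains at least $2\alpha_n q_{n+1}-1=3$ multiples of $1/u_{n+1}$, the shrunken ``safe zone'' still contains at least $3-4/q_{n+2}\ge 2$ of them once $q_{n+2}>4$, and disjointness of siblings needs only $2\alpha_{n+1}<1$, i.e.\ $q_{n+2}>4$ again. So the inequalities close for all $n\ge N_0$ and the tree can be started at that level. Your identification of the calibration of $\alpha_n$ as the only delicate point is accurate.
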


One can show that the implications of Eggleston Theorem \ref{TEgg} cannot be inverted:  

\begin{rem}[{\cite[Remark 3.5]{BDMW1}}]\label{B-M-W} If $H$ is an infinite characterized subgroup of $\T$, with characterizing sequence $\us\in\Z^\N$, then for every 
$m\in\N$ one can find a strictly increasing sequence $\us^*\sim \uuu$ such that $q^{\us^*}=\infty$ (i.e., 
$\limsup_n\frac{u_{n+1}^*}{u_n^*}=\infty$) and $q_{\us^*}=m$
(i.e., $\liminf_n\frac{u_{n+1}^*}{u_n^*}=m$). 
\end{rem}

This remark shows that the properties of having bounded ratios, or having ratios converging to $\infty$ are not $\sim$-invariant. Actually, if one takes a sequence $\uuu$ with bounded ratios, and then a sequence $\us^*\sim \uuu$ as in the remark, then one will have also $\us^*\approx \uuu$, according to Theorem \ref{CorC1}, while $\uuu^*$ will fail to have the property of having bounded ratios. 

On the other hand the following remark,  answering a question posed in \cite{BDMW1}, is a sort of inversion of the first Eggleston implication.

\begin{rem}[{\cite[Theorem 4.1]{BSW}}]
 Every countable subgroup of the circle group, admits a $q$-bounded characterizing sequence.
\end{rem}

If $(q_n)$ is bounded, then $\tTu$ is countable and hence obviously $F_\sigma$. One can study the relations between these three properties, that are not equivalent  in general (see Theorem \ref{ThE} and remarks \ref{Ranotd} and \ref{Rbnotd}). To this end we distinguish two cases, the first one is the case of a general sequence of integers. The second one is the case of a particular kind of sequences of integers, namely the arithmetic sequences (see Definition \ref{Def_a_seq}). In this case the three 
properties mentioned above are equivalent. 

\subsection{General Sequences}  
 
 According to Theorem \ref{CorC2}, when $\sv(X)$ is countable, then the topology $\tau_{\varrho_\vs}$ is discrete on $X$, where $X$ is a compact metrizable abelian group. Hence, for $X=\T$ and a sequence of integers $\us$, having a  bounded sequence of ratios $(q_n)$, the topology $\tau_{\varrho_\vs}$ is discrete on $\T$
 in view of Theorem \ref{TEgg} (a). However, one can say something more precise than Theorem \ref{CorC2}: 

\begin{teo}[{\cite[Theorem C]{DI3}}]\label{ThA}
Let $\us$ be a strictly increasing sequence of positive integers such that its sequence of ratios $(q_n)$ is bounded. In that case $\tau_{\varrho_\us}$ is discrete. 

In particular if $0<C\in\R$ and $q_n\le C$ for all $n\in\N$, then $B_{\frac{1}{2C}}^{\varrho_\us}(0) = \{0\}$ in $\T$.
\end{teo}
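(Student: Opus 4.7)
Plan. Since $\varrho_\us$ is translation invariant, discreteness of $\tau_{\varrho_\us}$ reduces to the ``in particular'' statement: once the ball $B^{\varrho_\us}_{1/(2C)}(0)$ is shown to be $\{0\}$, every singleton becomes $\tau_{\varrho_\us}$-open by translation. I therefore focus on identifying this ball directly.

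The core argument is a ``no wrap-around'' induction exploiting the uniform ratio bound. Fix $x\in B^{\varrho_\us}_{1/(2C)}(0)$ and let $t\in(-1/2,1/2]$ be its real representative, so $|t|=\|x\|<1/(2C)$ and $\|u_nx\|<1/(2C)$ for every $n$. I would prove by induction on $n$ that $u_n|t|<1/(2C)$ for all $n$, adjoining $u_0:=1$ as a starting point (this does not affect $\varrho_\us$, since the term $\delta(x,y)$ in its definition already plays the role of $d(u_0x,u_0y)$). The base $n=0$ is the hypothesis on $\|x\|$. For the inductive step, if $u_{n-1}|t|<1/(2C)$, then the ratio bound gives
\[
u_n|t|\;=\;q_n\cdot u_{n-1}|t|\;\le\;C\cdot\tfrac{1}{2C}\;=\;\tfrac12,
\]
so the real number $u_nt$ stays inside the fundamental domain $(-1/2,1/2)\subset\R$ and hence $\|u_nx\|_\T=|u_nt|=u_n|t|$; combining with the ball hypothesis $\|u_nx\|<1/(2C)$ yields $u_n|t|<1/(2C)$, closing the induction.

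Since $(u_n)$ is a strictly increasing sequence of positive integers, $u_n\to\infty$, and the uniform bound $u_n|t|<1/(2C)$ forces $|t|=0$, i.e.\ $x=0$. This proves $B^{\varrho_\us}_{1/(2C)}(0)=\{0\}$ and, by translation invariance, discreteness of $\tau_{\varrho_\us}$. The main technical subtlety is the very first inductive step, from $u_0=1$ to $u_1$: one has to read the hypothesis ``$q_n\le C$ for all $n\in\N$'' as also covering $q_1=u_1/u_0=u_1$, which is the natural interpretation once $u_0:=1$ is adjoined to the sequence. Without such a control on the very first term, the base of the induction can fail and the claimed ball size ceases to be the correct one; once this convention is fixed, the rest of the argument is a single clean iteration in $\T$.
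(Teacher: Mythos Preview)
Your argument is correct. The inductive ``no wrap-around'' step is exactly the right mechanism, and you are right to flag the convention $u_0:=1$ (equivalently, that $q_1=u_1\le C$ is part of the hypothesis) as essential: without control on the first term the stated radius $1/(2C)$ can fail, as the example $u_n=100\cdot 2^n$, $C=2$, $x=1/100\in B^{\varrho_\us}_{1/4}(0)\setminus\{0\}$ shows.

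Your route is genuinely different from the paper's. The paper does not prove the quantitative ball statement at all---it simply quotes it from \cite{DI3}---and for the qualitative clause (discreteness of $\tau_{\varrho_\us}$) it argues indirectly in the sentence preceding the theorem: bounded ratios imply that $\tTu$ is countable by Eggleston's Theorem~\ref{TEgg}(a), and then countability of $\sv(X)$ implies that $\tau_{\varrho_\vs}$ is discrete by Theorem~\ref{CorC2}. That chain leans on Theorem~\ref{ThC} (the $F_\sigma$-test criterion) together with the uniqueness of the Polish topology on a characterized subgroup, and it produces no explicit radius. Your direct computation is elementary and self-contained, avoids Eggleston's theorem and the Polishability machinery entirely, and simultaneously delivers the explicit bound; what the paper's route buys instead is the conceptual placement of the result as a quantitative refinement of Theorem~\ref{CorC2}.
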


The proof of the next theorem uses a property of $\T$ first noticed by Hewitt \cite{H}, and then extended to all subgroups of $\T$ (and elsewhere) 
in \cite{HH}: {\em if $H$ is a subgroup of $\T$ and $\tau$ is a strictly finer locally compact group topology on $H$, then $\tau$ is discrete.} Hence one can add to Theorem \ref{CorC2} one more equivalent condition in the case $X=\T$, given by the next theorem.

\begin{teo}[{\cite[Theorem 2.1]{HH}}]\label{ThD} 
Let $H\lneq\T$. Then $H\in\sLC(\T)$, if and only if $H$ is countable.
\end{teo}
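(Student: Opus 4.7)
The plan is to handle the two implications separately, with the reverse implication being essentially immediate and the forward one relying on the cited Hewitt-type property of $\T$ together with Remark \ref{11Ott}.

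For the \emph{if} direction, suppose $H$ is countable. I would simply equip $H$ with the discrete topology $\tau_d$: this is a Polish locally compact group topology. It remains to check that $\tau_d$ has the same Borel sets as the subspace topology $\tau\restriction_H$ inherited from $\T$. Since $H$ is countable and $\T$ is metrizable, every singleton of $H$ is $G_\delta$ in $\tau\restriction_H$, hence every subset of $H$ is $\tau\restriction_H$-Borel; on the other hand, in $\tau_d$ every subset is open, and a fortiori Borel. Thus both topologies share the same Borel algebra and $H\in\sLC(\T)$.

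For the \emph{only if} direction, let $H \lneq \T$ and assume $H\in\sLC(\T)$. Denote by $\tau(H)$ the (unique, by \cite{S}) Polish locally compact group topology witnessing $H\in\sLC(\T)$. As observed just before Remark \ref{11Ott}, one always has $\tau(H) \supseteq \tau\restriction_H$, so either $\tau(H) = \tau\restriction_H$ or $\tau(H)$ is strictly finer than $\tau\restriction_H$.

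In the first case, Remark \ref{11Ott} tells us that $\tau(H) = \tau\restriction_H$ precisely when $H$ is closed in $\T$; since $H$ is a proper closed subgroup of $\T$, it is finite, hence countable. In the second case, $\tau(H)$ is a strictly finer locally compact group topology on the subgroup $H$ of $\T$, so the Hewitt result (quoted in the paragraph preceding the statement, from \cite{H,HH}) forces $\tau(H)$ to be discrete. Being at the same time Polish (hence second countable), the discrete space $(H,\tau(H))$ must be countable. In either case $H$ is countable, completing the proof.

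The only genuine subtlety is ensuring that the dichotomy ``$\tau(H)=\tau\restriction_H$ or $\tau(H)$ strictly finer'' is actually exhaustive and correctly invokes the quoted results; once Remark \ref{11Ott} and the Hewitt-Herfort property of $\T$ are in hand, the argument is essentially formal. No quantitative computation on sequences $\us\in\Z^\N$ is needed, because the statement is purely topological-algebraic and already abstracts away the characterizing sequence via the notion of $\sLC(\T)$.
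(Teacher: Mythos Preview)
Your proof is correct and follows exactly the strategy the paper indicates just before the statement: the Hewitt-type fact that any strictly finer locally compact group topology on a subgroup of $\T$ is discrete, combined with the observation (Remark \ref{11Ott}) that $\tau(H)=\tau\restriction_H$ forces $H$ to be closed, hence finite. The only quibble is terminological: the reference \cite{HH} is Dikranjan--Impieri, not ``Hewitt--Herfort'', but this does not affect the mathematics.
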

Let us recall that  $\sLC(\T)\subset\CH(\T)$ (by Corollary \ref{corollary:negro}) and $H=\tTu$ coincides with $\T$ if and only if $\us$ is definitely $0$ (see \cite[Example 2.8]{BDMW1}). 

It turns out that this theorem cannot be proved in more general situation, i.e., it is specific for the group $\T$. 

\begin{teo}[{\cite[Theorem 2.2]{HH}}]\label{ThDD} 
 Let $X$ be an infinite compact abelian group. Then the following are equivalent: 
\begin{itemize}
    \item[(a)]  $X \cong \T$; 
    \item[(b)]  whenever $H\in\sLC(X)$  and  $H \ne X$, then $H$ is countable.
    \end{itemize} 
\end{teo}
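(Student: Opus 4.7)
The plan is to handle (a) $\Rightarrow$ (b) directly from Theorem \ref{ThD}: if $X \cong \T$ and $H \in \sLC(X)$ is a proper subgroup, then $H$ is countable by that very theorem. All the substance lies in the converse.

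For (b) $\Rightarrow$ (a), I would argue the contrapositive: assuming $X$ is an infinite compact abelian group with $X \not\cong \T$ (and metrizable, as is implicit in the definition of $\sLC$), I produce a proper uncountable $H \in \sLC(X)$. The witness will simply be \emph{any} proper closed infinite subgroup $Y$ of $X$. Indeed, as a closed subgroup of the Polish group $X$, such a $Y$ is itself Polish in the subspace topology, which is moreover compact, hence locally compact; so $Y \in \sLC(X)$. Since $Y$ is compact and infinite, $|Y| = \cc$, so $Y$ is both uncountable and proper, giving the required violation of (b).

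The problem thus reduces to the following algebraic-topological lemma: \emph{if $X$ is infinite compact abelian and $X \not\cong \T$, then $X$ admits a proper closed infinite subgroup}. By Pontryagin duality, closed subgroups of $X$ correspond bijectively to subgroups of $G := \widehat X$ via $A \mapsto A^\perp$, with $A^\perp$ proper iff $A \neq 0$ and $A^\perp$ finite iff $[G:A] < \infty$. So the failure of the desired conclusion translates to the purely algebraic condition that every nonzero subgroup of the infinite discrete abelian group $G$ has finite index. I would then deduce $G \cong \Z$ (equivalently $X \cong \T$) in three steps: (i) $G$ is torsion-free, for otherwise a finite nonzero torsion subgroup would have infinite index in the infinite group $G$; (ii) $G$ has rank one, for otherwise two $\Z$-independent elements $a,b$ would force $\langle a\rangle$ to have infinite index; (iii) after embedding $G \hookrightarrow \Q$ so that a chosen nonzero $a \in G$ corresponds to $1$, the quotient $G/\langle a\rangle$ sits as a torsion subgroup of $\Q/\Z$, and its finiteness (forced by the hypothesis) compels $G$ to be finitely generated, hence cyclic, so $G \cong \Z$.

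The main obstacle is precisely this final step (iii), i.e., the classical structural fact that a torsion-free rank-one abelian group in which every nonzero subgroup has finite index must already be $\Z$; the surrounding steps amount only to standard Pontryagin duality together with the trivial remark that a closed subgroup of a compact metrizable group is compact metrizable in the subspace topology.
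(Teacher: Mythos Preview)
The paper does not give its own proof of this theorem; it is merely quoted from \cite{HH}. So there is no in-paper argument to compare against, and the relevant question is simply whether your sketch is sound. It is.

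Your direction (a)$\Rightarrow$(b) is exactly Theorem~\ref{ThD}. For (b)$\Rightarrow$(a), your reduction is clean and correct: a proper closed infinite subgroup $Y$ of a compact metrizable abelian group $X$ is automatically compact metrizable in the subspace topology, hence lies in $\sLC(X)$ (the subspace topology is itself the witnessing Polish locally compact topology, cf.\ Remark~\ref{11Ott}); and being an infinite compact group, $Y$ is perfect, so $|Y|=\cc$. The residual lemma---that a countably infinite discrete abelian group $G$ in which every nonzero subgroup has finite index must be $\Z$---is handled correctly by your three steps: torsion-freeness, rank one, and then finite generation forced by $[G:\langle a\rangle]<\infty$.

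One small comment on your parenthetical about metrizability: you are right that the paper's definition of $\sLC(X)$ (via $\sPol(X)$, Definition~\ref{DPol}) presupposes that $X$ is Polish, so the statement should be read for metrizable $X$. If one wanted to cover non-metrizable $X$ as well, your closed-subgroup idea still works after one extra step: choose a nonzero subgroup $B\le\widehat X$ with $\widehat X/B$ countably infinite (such $B$ exists since $\widehat X$ is uncountable), and take $Y=B^\perp$; then $Y$ is a proper closed \emph{metrizable} infinite subgroup, hence compact Polish in the subspace topology and again in $\sLC(X)$. But for the paper's framework your restriction to the metrizable case is the appropriate reading.
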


\subsection{Arithmetic Sequences}\label{secaseq}

In the case of $\T$ one has the following two examples of characterized non-$F_\sigma$-subgroups: 
 
\begin{example}\label{Exa_NonFS}
\begin{itemize}
    \item[(a)]   For the sequence $\us = (2^{2^n})$ Bukovsk\'y, Kholshevikova, Repick\'y \cite{BKR} proved that  
the characterized subgroup $\tTu$ of $\T$  is not an $F_\sigma$-set.
    \item[(b)]  For the sequence $\us = (n!)$ Gabriyelyan \cite{Ga2} proved that the subgroup $\tTu$ of $\T$  is not an $F_\sigma$-subgroup of $\T$. 
\end{itemize}
\end{example}

The obvious common features between  (a) and (b) are $u_n | u_{n+1}$ and $q_n= \frac{u_{n+1}}{u_n} \to \infty$. 
We are going to use the first one to define the following new notion.

\begin{defin}\label{Def_a_seq} 
  A strictly increasing sequence of positive integers $\us = (u_n)$  is called {\em arithmetic} (or briefly, an {\em a-sequence}) if $u_n|u_{n+1}$ for all $n\in\N$. 
\end{defin}

\begin{notation}
  Let $\PP$ be the set of all prime numbers. If $p\in\PP$, then $\Z(p^\infty)$ is the Pr\"ufer $p$-group and $\Z(p^n)$ the cyclic group of order $p^n$ where $n\in\N$.
\end{notation}

Note that in this case $(q_n)$ is a sequence of integers. The problem of the description of the structure of $\tTu$ in the case of arithmetic sequences has been raised in  \cite[Chap.4]{DPS}, where some partial results can be found. Much earlier, Armacost \cite{Arm} considered two special kinds of  
a-sequences, $u_n = n!$ and $u_n = p^n$ for some $p \in \PP$. The subgroup characterized by the former type is $\Z(p^\infty)$, as shown in \cite{Arm}. 
The subgroup characterized by $u_n = n!$ was denoted by $\T!$, its description was left as an open problem in \cite{Arm}, 
resolved independently by Borel \cite{Bo2} and \cite{DPS}. Further results on subgroups of $\T$ characterized by a-sequences can be found in \cite{DdS,DI}.
A complete description of this class of subgroups of $\T$ is given in  \cite{DI}.

\begin{exa}\label{RemBDMW}(\cite{BDMW}) If $\us$ is an a-sequence and $(q_n)$ is bounded, then 
$$
\tTu\cong\left(\bigoplus_{i=1}^s\Z(p_i^{k_i})\right) \oplus\left(\bigoplus_{j=1}^r\Z(t_j^\infty)\right);
$$ 
where $p_i\in\PP$, $k_i=|\{n:p_i|q_n\}|<\infty$ and $t_j$ are primes that divide infinitely many $q_n$. It is easy to see that 
the subgroups of this form of $\T$ are countably many. 
\end{exa}

 In the case of arithmetic sequences one can add some further equivalent conditions to the conditions of Theorem \ref{CorC2} as follows.

\begin{teo}[{\cite[Theorem E]{DI3}}]\label{ThE}
 
The following are equivalent for an a-sequence $\us$ in $\Z$:
\begin{itemize}
    \item[(a)] $\tTu\le\Q/\Z$;
    \item[(b)] $(q_n)$ is bounded;
    \item[(c)] $\tTu$ is countable;
    \item[(d)] $\tTu\in \SF(\T)$;
    \item[(e)] $\tTu\in\tau_{\varrho_\us}$.
\end{itemize}
\end{teo}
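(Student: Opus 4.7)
The strategy is to run the cyclic chain $(a)\Rightarrow(c)\Rightarrow(d)\Rightarrow(e)\Rightarrow(b)\Rightarrow(a)$, closing all five equivalences at once; four arrows are immediate and only $(e)\Rightarrow(b)$ requires real work.

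$(a)\Rightarrow(c)$ is immediate from the countability of $\Q/\Z$. $(c)\Rightarrow(d)$ is the standard fact that a countable subset of a Hausdorff space is an $F_\sigma$-set. $(d)\Rightarrow(e)$ combines Theorem \ref{ThC}, which yields $\tTu\in\tau_{\varrho_\us}^*$ whenever $\tTu\in\SF(\T)$, with the inclusion $\tau_{\varrho_\us}^*\subseteq\tau_{\varrho_\us}$. And $(b)\Rightarrow(a)$ is Example \ref{RemBDMW}: for an a-sequence with bounded $(q_n)$ the decomposition of $\tTu$ displayed there is a direct sum of cyclic $p$-groups and Pr\"ufer groups, hence torsion, so $\tTu\leq\Q/\Z$.

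The substantive step is $(e)\Rightarrow(b)$, which I would prove by contraposition. Assuming $(q_n)$ unbounded, I aim to produce, for each $N\in\N$, an explicit element $x\in B_{1/N}^{\varrho_\us}(0)\setminus\tTu$, so that no $\tau_{\varrho_\us}$-neighborhood of $0$ sits inside $\tTu$. Since $\limsup_n q_n=\infty$, I pick a sparse increasing sequence of indices $m_1<m_2<\cdots$ with $q_{m_l}\geq 4N$ for every $l$ and $u_{m_l-1}\geq 2^l$ to ensure convergence, set $a_l=\lfloor q_{m_l}/(4N)\rfloor$, and define
\[
x=\sum_{l=1}^\infty\frac{a_l}{u_{m_l}}\in\T.
\]
Using the divisibility $u_{m_s}\mid u_j$ for $m_s\leq j$ characteristic of an a-sequence, the residue $u_jx\pmod{\Z}$ equals the tail $\sum_{m_s>j}a_s/(q_{j+1}\cdots q_{m_s})$. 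Three routine estimates then deliver: $\|x\|<1/N$; $\|u_jx\|<1/N$ for every $j$; and $\|u_{m_l-1}x\|\to 1/(4N)$, the last because its leading term $a_l/q_{m_l}$ tends to $1/(4N)$ while subsequent ones are $O(1/q_{m_l})$. The first two place $x$ inside $B_{1/N}^{\varrho_\us}(0)$, and the third shows $u_jx\not\to 0$, so $x\notin\tTu$.

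The main obstacle is this simultaneous two-sided control: $\sup_j\|u_jx\|$ must stay strictly below $1/N$ while $\|u_{m_l-1}x\|$ remains uniformly bounded away from $0$. That is exactly what forces the coefficient choice $a_l\approx q_{m_l}/(4N)$, which makes the dominant fractional part fall in $(0,1/(2N))$, together with a sparsity condition on $(m_l)$ that renders the tail contributions $a_s/(q_{j+1}\cdots q_{m_s})$ from later indices negligible against the leading term. Once this balance is achieved, the cycle closes and conditions (a)--(e) are equivalent.
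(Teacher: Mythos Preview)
Your proof is correct and follows the paper's approach: the paper likewise isolates $(e)\Rightarrow(b)$ as the only nontrivial implication (citing \cite{DI3} for it rather than giving the construction), derives $(d)\Rightarrow(e)$ from Theorem~\ref{ThC}, and links $(a),(b),(c)$ through the same cited results --- the only cosmetic difference is that the paper quotes the full equivalence $(a)\Leftrightarrow(b)$ from \cite{DdS,DI} and uses Eggleston for $(b)\Rightarrow(c)$, whereas you route through $(a)\Rightarrow(c)$ trivially and $(b)\Rightarrow(a)$ via Example~\ref{RemBDMW}. One small remark on your explicit construction for $(e)\Rightarrow(b)$: for the leading term $a_l/q_{m_l}$ to tend to $1/(4N)$ you need $q_{m_l}\to\infty$, not merely $q_{m_l}\ge 4N$, but this is freely arranged since $(q_n)$ is unbounded.
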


The implications (b)$\Rightarrow$(c)$\Rightarrow$(d) are immediate (the first one due to Theorem \ref{TEgg} (a)), while the equivalence between (a) and (b) is proved in  \cite{DdS,DI}. The implication (d)$\Rightarrow$(e) is a consequence of the Theorem \ref{ThC} and the fact that $\tau_\varrho^*\subseteq\tau_\varrho$. The last implication (e)$\Rightarrow$(b) is proved in \cite{DI3}.

From the equivalence of (a) and (b) in the above theorem and Example \ref{RemBDMW}, we deduce that {\em 
only countably many subgroups of $\Q/\Z$ can be characterized by means of an a-sequence $\us$ in $\Z$}.  

Let us see that some of the implications of Theorem \ref{ThE} do not hold for a general $\us\in\Z^\N$.

\begin{rem}[(b)$\nRightarrow$(a)$\nLeftarrow$(c)]\label{Rbnotd}
If $\us$ is the Fibonacci's sequence, then $(q_n)$ is bounded, so $\tTu$ is countable (by virtue of Theorem \ref{TEgg}),
 i.e. (b) and (c) of Theorem \ref{ThE} holds. Indeed $u_n=u_{n-1}+u_{n-2}$ for all $n>1$ and $u_0=u_1=1$. Hence, $q_n=\frac{u_n}{u_{n-1}} = 
 1 + \frac{u_{n-2}}{u_{n-1}} \le2$ for all $n\in\N$.

On the other hand, $\tTu$ is the infinite cyclic group generated by the fractional part of the golden ratio
 \cite{La,BDMW,BDS}.  In particular, $\tTu$ is not torsion. 
\end{rem} 

The following remark shows that (a)$\nRightarrow$(b)$\nLeftarrow$(c) in Theorem \ref{ThE} in case $\us$ is not an a-sequence.

\begin{rem}[(a)$\nRightarrow$(b)$\nLeftarrow$(c)]\label{Ranotd}
Take any infinite subgroup $H$ of $\Q/\Z$. By Remark \ref{B-M-W}, 
$H$ has a characterizing sequence such that its sequence of ratios is unbounded. This proves the non-implication (a)$\nRightarrow$(b) in Theorem \ref{ThE}.
As (a) implies (c), this witnesses also the non-implication  (c)$\nRightarrow$(b)
\end{rem}

We do not know if the remaining implication (d)$\Rightarrow$(c) is true in the general case (see Question \ref{Q1}).

The following diagram summarizes Theorem \ref{ThE} and remarks \ref{Rbnotd} and  \ref{Ranotd}, where the question mark denotes the unknown implication and the slashes denote the failing implications.

\[
{\xymatrix@!0@C4.2cm@R=1.6cm{ 
(a) \ar@{=>}@/_/[rd] \ar|{/}@/_/[dd] & & \\
  &(c)\ar|{/}@/_/[ul] \ar|{/}@/_/[dl]\ar@{=>}@/_/[r]	&(d) \ar@/_/_{?}[l]\\
(b) \ar|{/}@/_/[uu] \ar@{=>}@/_/[ur]& &
} }
\]



\section{Questions and Remarks}

We start with some questions and remarks concerning characterizing sequences. 

\begin{rem} If $\ACH(\T)$ denotes the set of  all subgroups of $\T$ characterized by an a-sequence, then
\begin{equation*}
	\{\text{closed subgroups}\}=\SG(\T)\subsetneq\ACH(\T)\cap\SF(\T).
\end{equation*}
An example witnessing the above proper inclusion is $\tTu\in\SF(\T)\setminus\SG(\T)$ where $\us=(p^n)$. More generally all  countably infinite $\tTu\in \ACH(\T)$ witness that proper inclusion, where $\us$ is an a-sequence.
\end{rem}

To give an exhaustive description of $\CH(\T)$ and $\ACH(\T)$, in terms of Problem \ref{probBor}, it is needed to establish if there exists a sequence
$\us$ such that $\tTu\notin\SGG(\T)$. This kind of sets is called $F_{\sigma\delta}$-complete (or $\Pi_3^0$-complete using the Descriptive Set-Theoretic terminology, see \cite{K}), that is,  in plain words,
 a set among the most complex in $\FF(\T)$. Hence question of whether $\CH(\T)\subseteq \SGG(\T)$ remains open: 
\begin{que}
\begin{itemize}
    \item[(a)] (\cite{Ga2}) Does there exists a sequence of integers $\us$ such that $\tTu\notin\SGG(\T)$?
    \item[(b)] What about a-sequences $\us$?
\end{itemize}
\end{que}

\begin{que}	
Does there exists an explicit method to decide whether $\tTu\in\SGG(\T)$ for a sequence of integers  $\us$? What about a-sequences $\us$?
\end{que}

Since $t(\T) = \Q/\Z$ is countable, all ($\mathfrak c$ many) torsion subgroups of $\T$ can be characterized, by Theorem \ref{ThmDK}. 
On the other hand, not all the torsion subgroups of $\T$ can be characterized by an a-sequence.  Indeed, by Example \ref{RemBDMW} and Theorem \ref{ThE}, all torsion subgroups of $\T$ of the form $\bigoplus_{i\in I}\Z(p_i^{k_i})$  with $k_i\in\N\cup\{\infty\}$ for all $i\in I, p_i\in\PP$,  and $p_i\neq p_j$ for  $i\neq j$ and $\left|I\right|=\aleph_0$ cannot be characterized by an a-sequence. Hence, as far as only torsion subgroups of $\T$ are concerned, one can say that
$\ACH(\T)$ is much smaller with respect to $\CH(\T)$ (as $\ACH(\T)$ contains only countably many torsion subgroups, while 
$\CH(\T)$ contains $\mathfrak c$ many torsion subgroups of $\T$). 

 According to   Theorem \ref{ThDD},  every compact metrizable abelian group $X$ non-isomorphic to the circle group, has a proper uncountable 
 $F_\sigma$-subgroup $H$ that is characterized. Indeed, such an $H$ can be chosen to admit a finer locally compact Polish topology, hence $H$ is characterized by Corollary \ref{corollary:negro} and $F_\sigma$ by (\ref{eqdag}). On the other hand, for $X=\T$ this matter remains unclear. This leaves open 
 the following question, related to the implication  (d)$\Rightarrow$(c) in Theorem \ref{ThE}.

\begin{que}
\label{Q1} If $\us\in\Z^\N$ is not definitely $0$ and $\tTu\in\SF(\T)$, must $\tTu$ be necessarily countable?
\end{que}

The final questions concern Polishable subgroups. 
 
 According to  Theorem \ref{CorC1}, the topology $\tau_{\varrho_\vs}$ does not depend  on the choice of $\vs$ in case $\sv(X)$ is an $F_\sigma$-subgroup of $X$, however the questions remain open in the general case: 

\begin{que}\label{Qtopocoinc}
	If $\sv(X)=\su(X)\notin\F(X)$, is $\tau_{\varrho_\vs}=\tau_{\varrho_\us}$ in the whole $X$?
\end{que}

In Theorem \ref{CorC1} we described when $(X,\tau_{\varrho_\vs})$ is Polish, provided $\sv(X)$ is an $F_\sigma$-subgroup of $X$. 

\begin{que}\label{QtopoPol}
Describe when $(X,\tau_{\varrho_\vs})$ is Polish in case $\sv(X)\notin\F(X)$.
\end{que}

 \section*{Acknowledgements}
 It is a pleasure to thank S. S. Gabriyelyan for his helpful remarks and suggestions that helped us to improve our paper. 


\end{document}